\newtheorem{thm}{Theorem}
\newtheorem{prop}[thm]{Proposition}
\newtheorem{cor}[thm]{Corollary}
\newtheorem{lem}[thm]{Lemma}
\theoremstyle{remark}
\newtheorem{rem}[thm]{Remark}
\newtheorem{ex}[thm]{Example}
\theoremstyle{definition}
\newcommand{\Q}{\mathbb{Q}}
\newcommand{\R}{\mathbb{R}}
\newcommand{\C}{\mathbb{C}}
\newcommand{\Z}{\mathbb{Z}}
\title{Geometric formality and non-negative scalar curvature}
\author{D.~Kotschick}
\address{Mathematisches Institut, {\smaller LMU} M\"unchen, Theresienstr.~39, 80333~M\"unchen, Germany}
\email{dieter@member.ams.org}
\date{December 21, 2012; \copyright{\ D.~Kotschick 2012}}
\subjclass[2000]{primary 53C25; secondary 53C43, 57M50, 57R17, 57R57, 58A14}
\thanks{Research done at the Institute for Advanced Study in Princeton with the support of The Fund For Math and The Oswald Veblen Fund.}
\begin{document}

\begin{abstract}
We classify manifolds of small dimensions that admit both, a Riemannian metric of non-negative scalar curvature, 
and a -- {\it a priori} different -- metric for which all wedge products of harmonic forms are harmonic. 
For manifolds whose first Betti numbers are sufficiently large, this classification extends to higher dimensions.
\end{abstract}

\maketitle

\bigskip
\bigskip

\section{Introduction}\label{s:introduction}

A closed orientable manifold is called geometrically formal if it admits a Riemannian metric for which all wedge products of 
harmonic forms are harmonic. Such a metric is called a formal metric. This concept was introduced in~\cite{Formal} as a Riemannian
analogue, and sharpening, of formality in rational homotopy theory. It has turned out to be a very strong property, so much so that, 
under suitable assumptions, one can hope to classify geometrically formal manifolds, and perhaps even all the formal metrics on them.

Harmonic forms with respect to formal metrics have constant lengths and inner products.
In conjunction with curvature conditions, this implies classification results. For example, it follows from a 
result of Seaman~\cite{Sea} that a four-manifold with a formal metric of positive sectional curvature must have
definite intersection form. This rules out $S^2\times S^2$, showing that the Hopf conjecture holds for formal
metrics. This conclusion was recently rediscovered by B\"ar~\cite{B}, who set out to classify formal metrics of non-negative 
sectional curvature on four-manifolds up to isometry.

In this paper we take a different approach, and look at geometrically formal manifolds admitting some, possibly non-formal,
Riemannian metric of non-negative scalar curvature. In small dimensions we classify such manifolds topologically.
This is possible because the existence of a metric of non-negative scalar curvature is a strong condition in small dimensions 
due to the Gauss--Bonnet theorem (in dimension $2$), the Thurston--Hamilton--Perelman geometrization (in dimension $3$),
and Seiberg--Witten theory (in dimension $4$).

Among closed orientable surfaces, only the sphere and torus are geometrically formal. On the sphere all metrics are formal,
whereas on the torus a metric is formal if and only if it is flat; cf.~\cite{Formal}. Geometrically formal three-manifolds were 
classified in~\cite[Section~4]{Formal}. If one restricts to manifolds admitting a metric of non-negative scalar curvature, 
then there are very few examples, as we will show in Section~\ref{s:3}:


\begin{thm}\label{t:main3}
Any geometrically formal closed oriented three-manifold admitting a Riemannian metric of non-negative scalar curvature is 
diffeomorphic to:
\begin{enumerate}
\item a connected sum of spherical space forms,
\item $S^1\times S^2$, or
\item a flat manifold.
\end{enumerate}
Conversely, all these manifolds have Riemannian metrics which are simultaneously formal and of non-negative scalar curvature.
\end{thm}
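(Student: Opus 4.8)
The statement has a classification half and a converse half; I treat the converse first, since there the geometry is transparent. For the converse one exhibits, on each listed manifold, a metric that is simultaneously formal and of non-negative scalar curvature. On a flat manifold a flat metric works: by the Bochner formula its harmonic forms are precisely the parallel forms, parallel forms are closed under wedge products, so the metric is formal, and its scalar curvature vanishes identically (this case is already in \cite{Formal}). On $S^1\times S^2$ one takes the product of the flat circle with a round two-sphere; it has positive scalar curvature, and it is formal because $H^*(S^1\times S^2;\R)$ is generated by the pullbacks of the two volume forms, every wedge product of which is again a scalar multiple of such a pullback, hence harmonic. Finally, a connected sum of spherical space forms is a rational homology sphere, so $b_1=b_2=0$; then for \emph{every} metric the only harmonic forms are the constants and the multiples of the volume form, and all their wedge products are harmonic, so every metric on such a manifold is formal. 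Since each summand $S^3/\Gamma$ carries the round metric of positive scalar curvature, and positive scalar curvature passes to connected sums in dimension $\geq 3$ by the surgery theorem of Gromov--Lawson and Schoen--Yau, the connected sum carries a metric of positive, hence non-negative, scalar curvature, which is automatically formal.

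For the classification, the plan is to use the non-negative scalar curvature metric first and geometric formality second. Either $M$ admits a metric of positive scalar curvature, or it does not. In the latter case the given metric cannot have scalar curvature $\geq 0$ and $\not\equiv 0$, since a conformal change would then make it positive (Kazdan--Warner); so it is scalar-flat, and on a closed manifold admitting no metric of positive scalar curvature every scalar-flat metric is Ricci-flat (Bourguignon), hence flat in dimension three, which is conclusion (3). Otherwise $M$ admits a metric of positive scalar curvature, so by Gromov--Lawson, Schoen--Yau and Perelman's geometrization it is a connected sum of spherical space forms and of copies of $S^1\times S^2$. Now invoke the classification of geometrically formal three-manifolds in \cite[Section~4]{Formal}, or argue directly: if $b_1(M)=0$ there is no $S^1\times S^2$ summand and we are in case (1); if $b_1(M)\geq 1$, a formal metric has a nowhere-zero harmonic one-form of constant length, which being closed makes $M$ fiber over $S^1$ (Tischler) with fiber a closed orientable surface $F$. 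If $F\neq S^2$, then $F$, and hence the bundle $M$, is aspherical and so irreducible; but among connected sums of spherical space forms and copies of $S^1\times S^2$ the irreducible ones are exactly the single quotients $S^3/\Gamma$, which have $b_1=0$ and cannot fiber over $S^1$ -- a contradiction. So $F=S^2$, $M$ is an orientable $S^2$-bundle over $S^1$, that is $M=S^1\times S^2$, which is case (2).

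The hard part is this last comparison: one must match the connected-sum decomposition forced by geometrization against the topological constraints of geometric formality -- in effect, that a geometrically formal three-manifold with $b_1\geq 1$ fibers over $S^1$ and therefore has no non-simply-connected prime summand -- and one must take care in the borderline scalar-flat case to conclude that $M$ itself is flat, not merely that some auxiliary metric is Ricci-flat. The explicit formal metrics in the converse and the scalar-curvature bookkeeping are routine once Bochner's formula, the K\"unneth description of harmonic forms on a product, and the Gromov--Lawson--Schoen--Yau connected-sum theorem are at hand.
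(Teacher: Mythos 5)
Your proof is correct and rests on exactly the same ingredients as the paper's: the paper first quotes the classification of geometrically formal three-manifolds from \cite[Section~4]{Formal} and then intersects it with Theorem~\ref{t:ess} and the strongly-scalar-flat $\Rightarrow$ Ricci-flat $\Rightarrow$ flat chain of Subsection~\ref{ss:ssf}, whereas you apply the same two inputs in the reverse order (curvature trichotomy first, then formality), re-deriving the relevant piece of the formality classification directly from the constant-length harmonic one-form via Tischler's fibration theorem. This is a reorganization rather than a genuinely different route, and your treatment of the converse direction coincides with the paper's.
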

Let us emphasize once more that we do not assume at the outset that there is a formal metric of non-negative scalar curvature - 
the formal metric and the metric with non-negative scalar curvature are allowed to be distinct. But the conclusion is that there is 
indeed a metric with both properties.

We can almost generalize this result to four dimensions:
\begin{thm}\label{t:main4}
Any geometrically formal closed oriented four-manifold admitting a (possibly non-formal) Riemannian
metric of non-negative scalar curvature is diffeomorphic to:
\begin{enumerate}
\item a rational homology sphere,
\item $\C P^2$, $S^2\times S^2$, or one of the two $S^2$-bundles over $T^2$,
\item a mapping torus $M(\varphi)$, where $\varphi\colon N\longrightarrow N$ is an orientation-preserving
self-diffeomorphism of a three-manifold admitting a metric of positive scalar curvature and $\varphi^*$ has no non-zero
fixed vector in $H^1(N;\R)$, or
\item a flat manifold.
\end{enumerate}
\end{thm}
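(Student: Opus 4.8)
\emph{Plan of proof.} The plan is to run the argument according to the first Betti number $b_1(M)$, exploiting that for a geometrically formal metric all harmonic forms have constant pointwise norms and inner products: an orthonormal basis of the space $\mathcal H^k$ of harmonic $k$-forms then spans a trivial rank-$b_k$ subbundle of $\Lambda^k T^*M$. In dimension four this gives trivial subbundles $\mathcal H^\pm\subset\Lambda^\pm$, whence $b^\pm\le 3$, and every unit vector in $\mathcal H^+$ is a nowhere-zero closed self-dual $2$-form, that is, a symplectic form compatible with the metric (and symmetrically for $\mathcal H^-$ and the reversed orientation). Recall also from the general theory of geometric formality that $b_1\ne n-1$ and that $b_1=n$ forces $M=T^n$; integrating the harmonic representatives of an integral basis of $H^1$ presents $M$ as a smooth fibre bundle over the torus $T^{b_1}$. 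For $n=4$ this leaves the cases $b_1\in\{0,1,2\}$ and $b_1=4$, the latter yielding the flat $T^4$ of~(4).

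Next I isolate the curvature alternative. By a theorem of Bourguignon, either $M$ admits a metric of positive scalar curvature, or every metric of non-negative scalar curvature on $M$ is Ricci-flat. In the Ricci-flat case, if $b_1\ge1$ the Cheeger--Gromoll splitting theorem together with Bieberbach's theorem forces $M$ to be flat, as in~(4); and if $b_1=0$ while $b^+\ge1$, a unit self-dual harmonic form makes the Ricci-flat metric almost Kähler, hence Kähler by Sekigawa's theorem, so $M$ is a Calabi--Yau surface with $b_1=0$, i.e.\ a $K3$ or an Enriques surface, contradicting $b^\pm\le3$; thus a Ricci-flat $M$ with $b_1=0$ has $b_2=0$ and is a rational homology sphere. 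So from now on $M$ may be assumed to carry a metric of positive scalar curvature.

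For $b_1=1$: a unit harmonic $1$-form realizes $M$ as a mapping torus $M(\varphi)$ of an oriented closed $3$-manifold $N$, with $\varphi$ orientation-preserving; isotoping the fibre to a stable minimal hypersurface, the Schoen--Yau descent argument shows $N$ carries a metric of positive scalar curvature, while $b_1(M(\varphi))=1+\dim\ker\bigl(\varphi^*-\mathrm{id}\mid H^1(N;\R)\bigr)=1$ means $\varphi^*$ has no nonzero fixed vector, which is~(3). For $b_1=2$: $M$ is a fibre bundle over $T^2$ whose fibre $\Sigma$ is an oriented closed surface; if $\Sigma$ had positive genus the total space would be aspherical and hence admit no metric of positive scalar curvature, so $\Sigma=S^2$ and $M$ is one of the two $S^2$-bundles over $T^2$, which is~(2). (The value $b_1=3$ does not occur: the same fibring argument would present $M$ as an $S^1$-bundle over $T^3$ with non-zero Euler class, a non-flat infranilmanifold, which is neither geometrically formal nor of non-negative scalar curvature.)

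The remaining and most delicate case is $b_1=0$ with $M$ of positive scalar curvature. If $b^+\ge2$, a unit self-dual harmonic form makes $M$ symplectic with $b^+\ge2$, contradicting the theorem of Ohta--Ono that such manifolds admit no metric of positive scalar curvature; hence $b^+\le1$, and likewise $b^-\le1$ (alternatively $b^\pm\le1$ follows from a Pontryagin-class computation for $\Lambda^+$, which is trivial as soon as $b^+\ge2$). If $b^+=b^-=0$ then $b_2=0$ and $M$ is a rational homology sphere, as in~(1). Otherwise, after possibly reversing orientation, $b^+=1$; the unit self-dual harmonic form makes $M$ symplectic, so by Liu's theorem $M$ is rational or ruled, and $b_1=0$ excludes irrational ruled surfaces, leaving $\C P^2$ (if $b_2=1$), $S^2\times S^2$, or $\C P^2\#\overline{\C P^2}$ (if $b_2=2$). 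The hard part is to exclude $\C P^2\#\overline{\C P^2}$: here geometric formality with $b^+=b^-=1$ supplies in addition a unit anti-self-dual harmonic form, hence a symplectic structure for the reversed orientation as well, and combining this with a Seiberg--Witten wall-crossing analysis in the $b^+=1$ setting rules it out, so $M\cong S^2\times S^2$, completing~(2). For the converse one exhibits the required metrics directly: on a rational homology sphere every metric is formal, so any metric of non-negative scalar curvature will do; the Fubini--Study metric on $\C P^2$ and the product of round metrics on $S^2\times S^2$ are formal and of positive scalar curvature; the $S^2$-bundles over $T^2$ carry locally homogeneous metrics built from round fibres over a flat base; a mapping torus $M(\varphi)$ as in~(3) carries a formal metric of positive scalar curvature, using that $N$ is a connected sum of spherical space forms and copies of $S^1\times S^2$ (Theorem~\ref{t:main3} and geometrization); and a flat manifold carries its flat metric, which is formal and scalar-flat.
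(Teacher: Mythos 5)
Your overall architecture matches the paper's: split into the positive-scalar-curvature and Ricci-flat cases via the Bourguignon/Kazdan--Warner alternative, run through the possible values of $b_1$, and use Liu and Ohta--Ono (via Taubes) once formality makes $M$ symplectic. But the two genuinely hard steps are asserted rather than proved, and both failures come from the same source: conflating the formal metric with the curvature metric, or gesturing at an argument instead of giving one. First, the exclusion of $\C P^2\#\overline{\C P^2}$. Your observation that formality with $b^+=b^-=1$ gives symplectic structures for both orientations buys nothing: this manifold admits an orientation-reversing self-diffeomorphism and genuinely carries K\"ahler forms for both orientations, and the ``Seiberg--Witten wall-crossing analysis'' is named but not performed --- it is far from clear that any such analysis applies, since the manifold has $b^+=1$, $b^-\le 9$ and positive scalar curvature, so its small-perturbation invariants vanish. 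The paper's actual argument (Lemma~\ref{l:blowup}) is of a different character: the harmonic representative of $H$ (with $H^2=1$) for a formal metric would be a symplectic form, and since the unique class of square $-1$ is $E$ with $H\cdot E=0$, that symplectic form would be minimal, contradicting Lalonde--McDuff's theorem that every symplectic form on $\C P^2\#\overline{\C P^2}$ is standard and hence non-minimal. Without something playing this role, your case $b_1=0$, $b^+=b^-=1$ is open.

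Second, the strongly scalar-flat case with $b_1=0$ and $b^+\ge1$. You write that a unit self-dual harmonic form ``makes the Ricci-flat metric almost K\"ahler, hence K\"ahler by Sekigawa's theorem.'' That form is self-dual, harmonic, and nowhere zero only with respect to the \emph{formal} metric; for the a priori different Ricci-flat metric there is no reason its self-dual harmonic forms are nowhere vanishing, so you do not obtain an almost K\"ahler structure compatible with the Einstein metric, which is exactly what Sekigawa's theorem needs. Bridging this is the content of Theorem~\ref{t:KW} (a closed symplectic four-manifold admitting a Ricci-flat metric is a finite isometric quotient of flat $T^4$ or of $K3$ with a Calabi--Yau metric), which the paper invokes before using the formality bounds on $b_2^{\pm}$ to kill the $K3$ quotients; some such input is indispensable here. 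Two smaller points: in the $b_1=1$ psc case the Schoen--Yau hypersurface is only \emph{homologous} to the fibre, not isotopic to it, so one must pass through the degree-one map to a fibre and rational inessentiality to conclude that $N$ itself has psc; and your converse for case (3) is unwarranted --- the theorem claims no converse, and the paper explicitly leaves open whether every such mapping torus admits a metric of non-negative scalar curvature, since an arbitrary monodromy $\varphi$ need not preserve any psc metric on $N$.
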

In Subsection~\ref{ss:3psc} below we describe the three-manifolds $N$ appearing in the third case explicitly. 

Combining Theorem~\ref{t:main4} with Freedman's solution of the topological four-dimensional Poincar\'e conjecture,
we obtain:
\begin{cor}
Let $M$ be a simply connected geometrically formal closed oriented four-manifold admitting a (possibly non-formal) 
metric of non-negative scalar curvature. Then one of the following holds:
\begin{enumerate}
\item $M$ is homeomorphic to $S^4$, or
\item $M$ is diffeomorphic either to $\C P^2$ or to $S^2\times S^2$.
\end{enumerate}
\end{cor}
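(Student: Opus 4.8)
The plan is to substitute a simply connected $M$ into Theorem~\ref{t:main4} and discard all but two of the four possibilities using $\pi_1(M)=1$.

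Cases~(3) and~(4) are immediately incompatible with simple-connectivity. A mapping torus $M(\varphi)$ admits the projection to its base circle, which induces a surjection $\pi_1(M(\varphi))\to\Z$, so $\pi_1$ is infinite; and a closed flat four-manifold has, by Bieberbach's theorem, a finite cover diffeomorphic to $T^4$, so again $\pi_1$ is infinite. Among the manifolds listed in case~(2), each of the two oriented $S^2$-bundles $S^2\to E\to T^2$ has $\pi_1(E)\cong\Z^2$, as one reads off from the homotopy exact sequence of the fibration (using $\pi_1(S^2)=0=\pi_2(T^2)$). Hence from case~(2) only $\C P^2$ and $S^2\times S^2$ survive, and both of these are genuinely simply connected; this yields alternative~(2) of the corollary.

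It remains to treat case~(1), where $M$ is a simply connected rational homology four-sphere. Then $H_1(M;\Z)=0$, so $H_3(M;\Z)\cong H^1(M;\Z)=0$ by Poincar\'e duality and the universal coefficient theorem, while $b_2(M)=0$ together with $H_1(M;\Z)=0$ forces $H_2(M;\Z)=0$, again by universal coefficients. Thus $M$ is a homotopy four-sphere, and Freedman's solution of the topological four-dimensional Poincar\'e conjecture gives that $M$ is homeomorphic to $S^4$, which is alternative~(1).

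I expect no serious obstacle: once Theorem~\ref{t:main4} is in hand, the corollary is essentially bookkeeping. The one point deserving attention is that in case~(1) one can only conclude a homeomorphism and not a diffeomorphism, since the smooth four-dimensional Poincar\'e conjecture is open; this is exactly why the statement separates ``homeomorphic to $S^4$'' from ``diffeomorphic to $\C P^2$ or $S^2\times S^2$''.
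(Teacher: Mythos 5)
Your argument is correct and is exactly the route the paper takes: the paper simply states that the corollary follows by combining Theorem~\ref{t:main4} with Freedman's theorem, and your write-up fills in the same bookkeeping (cases (3) and (4) and the $T^2$-bundles are excluded by $\pi_1=1$, and a simply connected rational homology four-sphere is a homotopy sphere, hence homeomorphic to $S^4$ by Freedman). No gaps.
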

The standard metrics on $S^4$, on $\C P^2$, and on $S^2\times S^2$ are all formal and of constant positive Ricci
curvature. Thus the Corollary is sharp modulo the four-dimensional smooth Poincar\'e conjecture. Of course any metric on 
a fake $S^4$ would be formal, but there would not necessarily be one with non-negative scalar curvature.

Theorem~\ref{t:main4} is not optimal, since there are homology four-spheres without metrics of positive scalar curvature,
see Example~\ref{ex} in Section~\ref{s:4} below. Furthermore, it is not clear whether all the manifolds in the third case really 
have metrics of non-negative scalar curvature.
To obtain an essentially sharp result we strengthen the non-negativity of the scalar curvature to non-negativity of the Ricci curvature:

\pagebreak

\begin{thm}\label{c:Ric}
Any geometrically formal closed oriented four-manifold admitting a (possibly non-formal) Riemannian
metric of non-negative Ricci curvature is diffeomorphic to:
\begin{enumerate}
\item a rational homology sphere with finite fundamental group,
\item $\C P^2$, $S^2\times S^2$, or one of the two $S^2$-bundles over $T^2$,
\item a mapping torus $M(\varphi)$, where $\varphi$ is an orientation-preserving
isometry of a spherical space form or of $\R P^3\#\R P^3$ with their standard metrics\footnote{A standard metric on $\R P^3\#\R P^3$ is one 
that is induced by realizing $\R P^3\#\R P^3$ as an isometric quotient of $S^2\times\R$; cf.~Theorem~\ref{t:Ham} below.}, or
\item a flat manifold.
\end{enumerate}
Conversely, all these manifolds are geometrically formal. Except in the first case they all admit formal metrics with
non-negative sectional curvature.
\end{thm}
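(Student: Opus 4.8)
The plan is to run the same machinery as in the proof of Theorem~\ref{t:main4}, but feed it the stronger input that the auxiliary metric has $\operatorname{Ric}\geq 0$ rather than merely $\operatorname{scal}\geq 0$. First I would recall the dichotomy forced by geometric formality in dimension four: either $b_1=0$, or $b_1\geq 1$ and then the constant-length harmonic $1$-forms trivialise a sub-bundle of the tangent bundle, splitting off a flat factor in a finite cover. In the $b_1=0$ case one uses Seiberg--Witten theory exactly as before; the extra Ricci hypothesis does not change the topological conclusion (rational homology sphere, case~(1)) but, via Bochner, a nonzero harmonic $1$-form would already contradict $b_1=0$, and with $\operatorname{Ric}\geq 0$ one gets in addition that the fundamental group is finite by Bonnet--Myers after passing to the universal cover (if $\operatorname{Ric}>0$ somewhere) or by the Cheeger--Gromoll splitting theorem (if $\operatorname{Ric}\equiv 0$, forcing a flat metric, hence case~(4)). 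So case~(1) acquires the finite-$\pi_1$ refinement for free.

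Next, the case $1\leq b_1\leq 3$. Here geometric formality produces $b_1$ linearly independent parallel $1$-forms for the \emph{formal} metric, but I want to exploit the non-negative Ricci metric. The key point is Hamilton's classification (invoked as Theorem~\ref{t:Ham} in the text) of the possible geometries: a closed oriented three-manifold with $\operatorname{Ric}\geq 0$ is, up to finite cover and the relevant surgery/normalisation, a spherical space form, a flat manifold, $S^2\times S^1$, or $\mathbb{RP}^3\#\mathbb{RP}^3$ with the standard $S^2\times\R$ metric. I would argue that when $b_1\geq 1$ the formal four-manifold fibres (or virtually fibres) over a circle or a torus with fibre such a three-manifold $N$, and then pin down the monodromy: geometric formality forces $\varphi^*$ to have no nonzero fixed vector on $H^1(N;\R)$ (this is the same argument as in Theorem~\ref{t:main4}), while the Ricci hypothesis forces $\varphi$ to be realised as an \emph{isometry} of the standard metric on $N$ — because the mapping torus itself carries $\operatorname{Ric}\geq 0$ and the splitting/rigidity results upgrade a mere diffeomorphism to an isometry. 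This yields case~(3), and the intermediate $b_1=2,3$ possibilities collapse into the $S^2$-bundles over $T^2$ of case~(2) or the flat case~(4).

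For $b_1=4$ the parallel $1$-forms for the formal metric trivialise $T M$, so $M$ is flat — case~(4) — and one checks $b_1$ cannot exceed $4$. This disposes of the classification direction. For the converse, I would exhibit the explicit formal metrics of non-negative sectional curvature: product and warped-product metrics on the $S^2$-bundles over $T^2$ (using that an oriented $S^2$-bundle over $T^2$ is either $S^2\times T^2$ or the nontrivial bundle, both of which carry formal metrics with $\sec\geq 0$ by the submersion/product construction), bi-invariant-type metrics descending to spherical space forms and to $\mathbb{RP}^3\#\mathbb{RP}^3$, flat metrics on the flat manifolds, and for the mapping tori one builds a metric that is locally a product $N\times\R$ twisted by the isometry $\varphi$ — this has $\sec\geq 0$ and is formal because the harmonic forms are exactly the $\varphi$-invariant harmonic forms on $N$ wedged with $dt$, and $\varphi$ being an isometry makes their wedge products harmonic. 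The rational homology spheres of case~(1) are geometrically formal for the trivial reason that $H^*$ is generated in degree $0$ and top degree, so any metric is formal; one does not claim a non-negatively curved one there, matching the statement.

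The main obstacle I expect is the upgrade from \emph{diffeomorphism} to \emph{isometry} of the monodromy $\varphi$ in case~(3): one has formality for one metric and $\operatorname{Ric}\geq 0$ for another, a priori unrelated, metric, so the argument must transfer information between them — presumably by showing the formal metric on the mapping torus can itself be taken to have $\operatorname{Ric}\geq 0$ (or by a direct rigidity argument using that the fibres of the fibration are forced to be totally geodesic with the standard geometry), after which Hamilton's theorem plus the rigidity of isometry groups of space forms finishes it. Verifying that $\mathbb{RP}^3\#\mathbb{RP}^3$ genuinely occurs, and that its standard $S^2\times\R$-quotient metric is formal with an isometry group rich enough to produce formal mapping tori, is the delicate sub-point.
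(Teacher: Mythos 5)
Your overall strategy --- reduce to Theorem~\ref{t:main4} and then use the Ricci hypothesis to sharpen cases (1) and (3) --- is the paper's strategy, but the proposal has genuine gaps at exactly the two points where the sharpening happens. The most serious one is the step you yourself flag as ``the main obstacle'': upgrading the monodromy $\varphi$ from a diffeomorphism to an isometry of a \emph{standard} metric. You leave this unresolved, and your guesses point in the wrong direction (you suggest making the \emph{formal} metric have $Ric\geq 0$, and you assert that geometric formality yields \emph{parallel} one-forms for the formal metric --- it does not; it yields constant-length one-forms, and the paper is explicit that these need not be parallel). The actual argument (Proposition~\ref{p:Ric} in the paper) works entirely with the auxiliary $Ric\geq 0$ metric $g$: by Bochner its harmonic one-forms are parallel, so the Albanese map of $g$ is a fiber bundle and $g$ is a local product; the fibers all carry the same induced metric with $Ric\geq 0$, which the monodromy preserves. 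One then runs Hamilton's Ricci flow on the fiber: it deforms the invariant metric to a standard one and preserves isometries, so the monodromy becomes an isometry of the standard metric. No information needs to be transferred from the formal metric to $g$ at this stage; formality only enters to force $b_1=1$ and the no-fixed-vector condition. Without the parallel-one-forms/local-product structure and the Ricci-flow step, case (3) is not established.

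The second gap is the finiteness of $\pi_1$ in case (1). Your dichotomy is not a proof: Bonnet--Myers requires $Ric\geq c>0$ everywhere, not ``$Ric>0$ somewhere'' (one would first need Aubin's deformation result, which you do not invoke), and ``$Ric\equiv 0$ forces a flat metric'' is false in dimension four ($K3$). The correct argument (Lemma~\ref{l:CG}) goes through the Cheeger--Gromoll splitting theorem: if $\pi_1(M)$ were infinite, the universal cover would be non-compact and split off a line, so a finite cover of $M$ would split off a circle diffeomorphically and $\chi(M)$ would vanish --- contradicting $\chi=2$ for a rational homology sphere. Minor further issues: $b_1=3$ is excluded outright by $b_1\neq n-1$ for formal (or $Ric\geq 0$) metrics rather than ``collapsing'' into another case, and your description of the harmonic forms on the mapping torus in the converse direction omits the $\varphi$-invariant forms not involving $dt$. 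The converse direction is otherwise in the same spirit as the paper's (explicit locally homogeneous metrics modelled on $S^3\times\R$ and $S^2\times\R^2$ with parallel harmonic forms).
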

This will be proved in Section~\ref{s:4} after the proof of Theorem~\ref{t:main4}.
The improvement stems only from the assumption that $Ric\geq 0$, and has nothing to do with geometric formality.
The point is that on a manifold with $Ric\geq 0$ all harmonic one-forms are parallel by the Bochner formula, and therefore 
one obtains structure results reducing to problems in lower dimensions as soon as the first Betti number is positive. 
Since we work in dimension four, the required results in lower dimensions are known. See Corollary~\ref{c:char} for 
a characterization of non-negatively curved four-manifolds with positive first Betti number that may be of independent interest.

Theorem~\ref{c:Ric} can be compared with B\"ar~\cite[Theorem~A]{B}, where a similar result is obtained under the 
stronger assumption that $M$ carries a Riemannian metric that is {\it simultaneously} formal and of non-negative 
{\it sectional} curvature. 

In the final section we extend Theorem~\ref{t:main4} to $n$-manifolds with $b_1\geq n-2$ for all $n$.

\subsection*{Acknowledgement:} I am grateful to C.~B\"ar for stimulating correspondence.

\section{Preliminaries}\label{s:prelim}

\subsection{Geometric formality}

We recall the results about geometrically formal manifolds proved in~\cite{Formal} that we shall need here.

First of all, harmonic forms with respect to a formal metric have constant lengths and constant inner products. Therefore,
if $M$ is a geometrically formal closed oriented $n$-manifold, then its Betti numbers are bounded as follows:
$b_k(M)\leq b_k(T^n)$, and similarly $b_{2i}^{\pm}(M)\leq b_{2i}^{\pm}(T^{4i})$ if $n=4i$. Further, $b_1(M)\neq n-1$,
and if $b_1(M)=n$, then $M$ is diffeomorphic to $T^n$ and every formal metric is flat. As soon as $b_1(M)>0$,
the Euler characteristic of $M$ vanishes.

On a four-manifold every self-dual harmonic two-form with respect to a formal metric is symplectic inducing the given orientation, and every
anti-self-dual harmonic two-form is symplectic inducing the opposite orientation. This implies that a geometrically formal four-manifold with 
$b_1(M)=0$ must have $b_2^{\pm}(M)\in\{ 0, \ 1\}$; compare~\cite[Subsection~5.3]{Formal}.

\subsection{Positive scalar curvature on three-manifolds}\label{ss:3psc}

Here we specify the manifolds appearing as fibers in the mapping tori of case (3) in Theorem~\ref{t:main4}.

As explained in~\cite[Section~2]{KN}, the following is a combination of results of Schoen--Yau~\cite{SY0}, Gromov--Lawson~\cite{GL} 
and Perelman~\cite{P1,P2}, interpreted in the context of the Kneser--Milnor prime decomposition of three-manifolds:
\begin{thm}\label{t:ess}
For a closed oriented connected three-manifold $M$ the following conditions are equivalent:
\begin{enumerate}
\item $M$ is rationally inessential,
\item $M$ has no aspherical summand $M_i$ in its prime decomposition,
\item $M$ is a connected sum of copies of $S^1\times S^2$ and of spherical space forms,
\item $M$ admits a metric of positive scalar curvature.
\end{enumerate}
\end{thm}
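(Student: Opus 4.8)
The plan is to reduce all four conditions to the Kneser--Milnor prime decomposition $M=M_1\#\dots\#M_k$ and to check them one prime summand at a time. First I would record the trichotomy for oriented prime three-manifolds. A prime summand is either $S^1\times S^2$ or irreducible; for an irreducible $M_i$ the sphere theorem gives $\pi_2(M_i)=0$, so the universal cover $\widetilde{M_i}$ is a simply connected three-manifold with $\pi_2=0$, hence acyclic by Hurewicz. If $\pi_1(M_i)$ is infinite then $\widetilde{M_i}$ is non-compact and contractible, so $M_i$ is aspherical, whereas if $\pi_1(M_i)$ is finite then Perelman's elliptization identifies $M_i$ with a spherical space form $S^3/\Gamma$. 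Thus each prime is exactly one of $S^1\times S^2$, a spherical space form, or an aspherical manifold, and ``no aspherical summand'' means precisely that every $M_i$ is of one of the first two types. This is the equivalence (2)$\Leftrightarrow$(3).

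Next I would prove (1)$\Leftrightarrow$(2) by a computation with the classifying map $c\colon M\to B\pi_1(M)$, where condition (1) reads $c_*[M]=0$ in $H_3(B\pi_1(M);\Q)$. Since $\pi_1(M)=\ast_i\pi_1(M_i)$ one has $B\pi_1(M)\simeq\bigvee_i B\pi_1(M_i)$, and $c$ may be chosen to send each connecting neck (an $S^2\times I$, on which it is null-homotopic) to the wedge point, so that $c_*[M]=\sum_i(\iota_i)_*(c_i)_*[M_i]$ in $H_3(B\pi_1(M);\Q)=\bigoplus_i H_3(B\pi_1(M_i);\Q)$. For $M_i=S^1\times S^2$ the base $B\pi_1(M_i)=S^1$ has $H_3=0$, and for a space form $\pi_1(M_i)$ is finite so again $H_3(B\pi_1(M_i);\Q)=0$; in both cases the contribution vanishes. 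For an aspherical prime $c_i$ is a homotopy equivalence, whence $(c_i)_*[M_i]=[M_i]\ne 0$. Therefore $c_*[M]=0$ if and only if $M$ has no aspherical summand.

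For (3)$\Rightarrow$(4) I would write down explicit metrics of positive scalar curvature --- the round metric on $S^3/\Gamma$ and the product of a circle with a round two-sphere on $S^1\times S^2$ --- and then invoke the Gromov--Lawson surgery theorem: in dimension three the connected sum is surgery in codimension $3$, so the class of manifolds admitting positive scalar curvature is closed under connected sums. Iterating yields such a metric on any connected sum of spherical space forms and copies of $S^1\times S^2$.

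The remaining implication (4)$\Rightarrow$(2) is the deepest, and is where I expect the main obstacle to lie, since it is the genuinely analytic ingredient rather than topological bookkeeping. Here I would appeal to the work of Schoen--Yau and Gromov--Lawson: their minimal-hypersurface, respectively enlargeability and Dirac-operator, methods show on the one hand that positive scalar curvature on a three-manifold is inherited by each of its prime summands, and on the other hand that no closed aspherical three-manifold admits positive scalar curvature. Together these force every prime summand of a manifold with positive scalar curvature to be non-aspherical, giving (4)$\Rightarrow$(2). Combining this with (2)$\Leftrightarrow$(3)$\Rightarrow$(4) closes the cycle among (2), (3) and (4), and the equivalence (1)$\Leftrightarrow$(2) then attaches the first condition, yielding the full four-way equivalence.
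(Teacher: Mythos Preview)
The paper does not actually prove this theorem; it is stated as a known combination of results of Schoen--Yau, Gromov--Lawson and Perelman, organised via the Kneser--Milnor prime decomposition, with a pointer to \cite[Section~2]{KN} for the details. Your sketch is precisely the expected elaboration of that attribution, and the arguments for (2)$\Leftrightarrow$(3), (1)$\Leftrightarrow$(2) and (3)$\Rightarrow$(4) are correct as written.

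One point in your (4)$\Rightarrow$(2) should be tightened. You split it into two sub-claims: (a) positive scalar curvature on a three-manifold is inherited by each prime summand, and (b) no closed aspherical three-manifold admits positive scalar curvature. But (a) is not what Schoen--Yau or Gromov--Lawson prove, and there is no evident non-circular proof of it short of the full classification you are trying to establish. What Gromov--Lawson actually show is that the \emph{obstruction} passes the other way: if some prime summand $M_i$ is aspherical, then the degree-one collapse $M\to M_i$ makes $M$ itself enlargeable, so $M$ carries no psc metric. That gives (4)$\Rightarrow$(2) in one stroke, without ever needing to produce a psc metric on an individual summand. You should also note, in line with the paper's attribution, that securing (b) for \emph{all} closed aspherical three-manifolds ultimately relies on geometrization, so Perelman enters here as well as in your (2)$\Leftrightarrow$(3); this is why the paper lists all three of Schoen--Yau, Gromov--Lawson and Perelman as ingredients.
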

A manifold is rationally inessential if its fundamental class maps to zero in $H_*(B\pi_1(M);\Q)$ under the classifying map of 
its universal covering.

\subsection{Strongly scalar-flat manifolds}\label{ss:ssf}

A closed manifold admitting Riemannian metrics of non-negative and not identically zero scalar curvature also has metrics 
with positive scalar curvature. 
There are also manifolds which admit scalar-flat metrics, but do not admit 
any metrics of positive scalar curvature. Such manifolds are called strongly 
scalar-flat. The simplest examples are tori and other flat manifolds~\cite{GL}.

It is known that scalar-flat metrics on strongly scalar-flat manifolds 
are in fact Ricci-flat, compare for example~\cite{einstein}. In 
dimension three this implies that they are flat. 

In dimension $4$, the only known Ricci-flat manifolds are strongly scalar-flat. 
They are flat manifolds and finite quotients of $K3$ surfaces with Calabi--Yau 
metrics. The isometric quotients of $K3$ surfaces were classified by 
Hitchin~\cite{HT}, who showed that the possible covering groups are 
$\Z_{2}$ and $\Z_{2}\times\Z_{2}$, both of which do actually occur.

The following result is due to J.~Wehrheim and myself, see the Appendix to~\cite{Monopole}:
\begin{thm}\label{t:KW}
  If a closed symplectic four-manifold $M$ admits a Ricci-flat metric $g$, then $(M,g)$ 
    is isometric to a finite quotient of $T^{4}$ or $K3$ with a flat, respectively Calabi--Yau
    metric.
\end{thm}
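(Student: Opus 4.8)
The plan is to combine the structure theory of closed Ricci-flat four-manifolds with Seiberg--Witten theory for symplectic four-manifolds. First I would dispose of the case $b_1(M)>0$. By the Cheeger--Gromoll splitting theorem a closed Ricci-flat manifold has a finite cover isometric to a Riemannian product $T^k\times N$ in which $N$ is closed, simply connected and Ricci-flat and $k=b_1(M)$. If $k\geq 1$ then $\dim N\leq 3$, so $N$ is flat; hence the cover, and therefore $(M,g)$ itself, is flat, and $(M,g)$ is an isometric quotient of $T^4$ by a finite group, which is the first alternative.

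So assume $b_1(M)=0$. The same splitting theorem shows that $\pi_1(M)$ is finite, so the universal cover $\tilde M$ is a closed simply connected symplectic four-manifold carrying the pulled-back Ricci-flat metric $\tilde g$, and it suffices to prove that $(\tilde M,\tilde g)$ is $K3$ with a Calabi--Yau metric; then $(M,g)=(\tilde M,\tilde g)/\pi_1(M)$ is the required finite isometric quotient. Here the symplectic hypothesis enters through Taubes' theorem: the canonical class $K$ of a symplectic structure on $\tilde M$ has non-trivial Seiberg--Witten invariant. Since $\tilde g$ is Ricci-flat its scalar curvature vanishes identically, so the Weitzenb\"ock formula for the Seiberg--Witten equations excludes irreducible solutions; hence any solution for the canonical $\mathrm{Spin}^c$-structure with respect to $\tilde g$ is reducible, which forces the $\tilde g$-self-dual part of $K$ to vanish, and therefore $K^2=c_1(\tilde M)^2\leq 0$. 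When $b_2^+(\tilde M)=1$ this needs the usual care about chambers; alternatively one invokes the Li--Liu classification, by which a symplectic four-manifold with $b_2^+=1$ and $c_1^2>0$ is rational or ruled and hence carries a metric of positive scalar curvature, something a closed manifold carrying a Ricci-flat metric cannot.

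On the other hand, for the Ricci-flat metric $\tilde g$ the Gauss--Bonnet and signature formulas give $c_1(\tilde M)^2=2\chi(\tilde M)+3\tau(\tilde M)=\frac{1}{4\pi^2}\int_{\tilde M}|W^+|^2\geq 0$. Together with the previous inequality this yields $2\chi(\tilde M)+3\tau(\tilde M)=0$, so $W^+\equiv 0$ and, $K$ being represented by an anti-self-dual harmonic form of zero square, $K=0$ in $H^2(\tilde M;\R)$; as $\tilde M$ is simply connected, $c_1(\tilde M)$ is itself zero. Now $\tilde M$ is a closed simply connected anti-self-dual Ricci-flat four-manifold, so its curvature operator annihilates the bundle of self-dual two-forms, the holonomy lies in $\mathrm{SU}(2)$, and $\tilde g$ is hyperk\"ahler; a closed simply connected hyperk\"ahler surface is $K3$, and $\tilde g$ is a Calabi--Yau metric. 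Equivalently, $2\chi(\tilde M)=3|\tau(\tilde M)|$ with $\tau(\tilde M)\neq 0$ is the equality case of the Hitchin--Thorpe inequality, and Hitchin's theorem~\cite{HT} then identifies $\tilde M$ as $K3$, the remaining possibilities being excluded by simple connectivity. This completes the argument.

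The main obstacle, and the step requiring the most care, is the Seiberg--Witten input when $b_2^+=1$: one must be sure that the non-vanishing of the Seiberg--Witten invariant of the canonical class is used in the chamber determined by the period point of the Ricci-flat metric, or else bypass this through the classification of symplectic four-manifolds with $b_2^+=1$ together with the fact that a closed manifold cannot carry both a Ricci-flat metric and one of positive scalar curvature. Everything else is a matter of assembling standard facts about closed Ricci-flat four-manifolds.
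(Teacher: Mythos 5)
The paper does not actually prove Theorem~\ref{t:KW}; it quotes it from the Appendix to~\cite{Monopole}, so there is no internal proof to compare against. Your overall strategy --- the Cheeger--Gromoll splitting theorem to dispose of infinite fundamental group, then Taubes' nonvanishing for the canonical $\mathrm{Spin}^c$-structure combined with the Weitzenb\"ock exclusion of irreducible solutions for scalar-flat metrics to get $c_1^2\leq 0$, the identity $c_1^2=2\chi+3\tau\geq 0$ for Einstein metrics, and the hyperk\"ahler/Hitchin identification of the equality case --- is the standard route and is essentially correct when $b_2^+\geq 2$. Two minor organizational remarks: the splitting theorem gives ``$\pi_1$ infinite $\Rightarrow$ a finite cover splits off a flat torus factor $\Rightarrow$ flat'', rather than directly ``$b_1=0\Rightarrow\pi_1$ finite''; and you could avoid passing to the universal cover altogether by proving $2\chi(M)+3\tau(M)=0$ and then combining Berger's inequality $\chi\geq 0$ for Einstein four-manifolds with Hitchin's theorem~\cite{HT}, which yields the finite-quotient conclusion in one step.

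The genuine gap is the case $b_2^+=1$, which you flag but do not close, and both of your proposed escapes fail as stated. First, it is false that a symplectic four-manifold with $b_2^+=1$ and $c_1^2>0$ must be rational or ruled: minimal complex surfaces of general type with $p_g=0$, such as the simply connected Barlow surface ($b_2^+=1$, $c_1^2=1$), are K\"ahler, hence symplectic, and are neither rational nor ruled. The correct form of the Liu/Ohta--Ono theorem is that $K\cdot[\omega]<0$, or the existence of a positive scalar curvature metric, forces rational or ruled. Second, the assertion that a closed manifold carrying a Ricci-flat metric cannot also carry a metric of positive scalar curvature is not a theorem; were it known, it would settle the open question of whether $S^4$ or a del Pezzo surface admits a Ricci-flat metric. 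So even after invoking the correct version of Liu's theorem you would still have to exclude Ricci-flat metrics on rational surfaces, which your argument does not do. Finally, the ``careful chamber'' alternative is not routine either: the expected dimension of the canonical moduli space is zero, so wall-crossing only says that the invariants in the two chambers differ by $\pm 1$; the Weitzenb\"ock argument shows the invariant vanishes in the chamber of the scalar-flat metric, and this is perfectly consistent with Taubes' nonvanishing in the other chamber, so no contradiction results. Closing this case requires the chamber-refined scalar curvature estimates of Liu~\cite{L} and Ohta--Ono~\cite{OO} (which is presumably why exactly these references appear in Section~\ref{s:4}); as written, your proof does not establish the theorem when $b_2^+=1$.
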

Thus, among symplectic manifolds, there can be no other strongly scalar-flat examples.

\section{Three-manifolds}\label{s:3}

First we want to prove Theorem~\ref{t:main3}.

According to~\cite[Section~4]{Formal}, a closed orientable three-manifold $M$ is geometrically
formal if and only if it is one of the following:
\begin{enumerate}
\item any rational homology sphere,
\item a mapping torus $M=M(\varphi)$ of a surface diffeomorphism $\varphi\colon\Sigma\longrightarrow\Sigma$
with $b_1(M)=1$, or
\item the three-torus $T^3$.
\end{enumerate}
We now go through this list using Theorem~\ref{t:ess}. 
In the first case every metric is formal, and there is one with positive scalar curvature if and only if $M$ is a connected sum of 
spherical space forms.
In the second case a psc metric can only exist if the fiber $\Sigma$ is $S^2$, for otherwise $M$ would be aspherical.
But every orientation-preserving diffeomorphism $\varphi$ of $S^2$ is isotopic to the identity, and so $M$ is diffeomorphic to
to $S^1\times S^2$. The product metric is formal with positive scalar curvature. 
There is no psc metric on $T^3$.

Finally, a strongly scalar-flat three-manifold must be flat; cf.~Subsection~\ref{ss:ssf} above. Conversely, flat metrics are 
formal and trivially of non-negative scalar curvature.
This completes the proof of Theorem~\ref{t:main3}.

Note that flat manifolds appear in all three cases above, according to whether the first Betti number is $0$, $1$ or $3$.
In the second case the fiber $\Sigma$ is $T^2$ if $M$ is flat.

We can compare Theorem~\ref{t:main3} with the classification of three-manifolds of non-negative Ricci curvature
due to Hamilton~\cite{Ham}:
\begin{thm}\label{t:Ham}
A closed oriented three-manifold admits a Riemannian metric of non-negative Ricci curvature if and only if it is 
diffeomorphic to:
\begin{enumerate}
\item a spherical space form,
\item $S^1\times S^2$ or $\R P^3\#\R P^3$, or
\item a flat manifold.
\end{enumerate}
\end{thm}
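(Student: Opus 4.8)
The plan is to invoke Hamilton's Ricci flow analysis in dimension three. First I would recall that Hamilton~\cite{Ham} proved that a closed three-manifold carrying a metric of positive Ricci curvature admits a metric of constant positive sectional curvature, hence is a spherical space form; this settles case (1). For the general non-negative case one runs the Ricci flow and uses the strong maximum principle of Hamilton for systems: if $Ric\geq 0$ everywhere but is not strictly positive, then after an instant the holonomy of the evolved metric is reducible, so the universal cover splits isometrically as a Riemannian product. In dimension three the possible splittings are $\R^3$ (flat, giving case (3)), $\R\times S^2$ (or $\R\times$ a round sphere quotient), or $\R^2\times S^1$ which is again flat. Tracking which compact quotients of $\R\times S^2$ carry an orientation-preserving isometric $\Z$-action shows the quotient is either $S^1\times S^2$ or $\R P^3\#\R P^3$, the latter realized as the quotient by the involution that acts antipodally on $S^2$ and by a reflection on $\R$. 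This yields case (2) and completes the list.

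The converse directions are easy and I would dispatch them quickly. A spherical space form inherits a metric of constant positive sectional curvature, in particular $Ric>0$. The product metric on $S^1\times S^2$ has $Ric\geq 0$ (with a nontrivial kernel in the $S^1$-direction), and the same holds for the induced metric on $\R P^3\#\R P^3$ as an isometric quotient of $S^2\times\R$. A flat manifold trivially has $Ric\equiv 0$.

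The one point requiring genuine care is the classification of the isometric quotients of the product geometries, i.e.\ identifying exactly which three-manifolds arise as $(S^2\times\R)/\Gamma$ with $\Gamma$ acting freely, cocompactly, isometrically and orientation-preservingly. Here $\Gamma$ projects to a group of isometries of $\R$, which is infinite cyclic (generated by a translation, possibly composed with the antipodal map on $S^2$), together with a finite part acting on $S^2$; freeness and orientation-preservation force the finite part to be trivial or the antipodal involution, leaving precisely $S^1\times S^2$ and $\R P^3\#\R P^3$. Since this is standard and the statement is quoted from~\cite{Ham}, I would present it as a citation rather than reproving it in detail; the substance of the theorem is Hamilton's convergence and splitting results, which I take as given.
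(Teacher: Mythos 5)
Your proposal is correct and follows essentially the same route as the paper: cite Hamilton's Ricci flow result reducing to the model geometries $S^3$, $S^2\times\R$, $\R^3$, and then classify their closed oriented quotients (for which the paper refers to Scott~\cite{S}). Your extra detail on the isometric quotients of $S^2\times\R$ is accurate and consistent with that reference.
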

Hamilton~\cite{Ham} proved that the Ricci flow deforms any metric of non-negative Ricci curvature to one of the 
model geometries $S^3$, $S^2\times\R$ or $\R^3$. Therefore, one only has to check the classification of their
closed oriented quotients; compare~\cite{S}.

Hamilton's theorem shows that in dimension three all manifolds with non-negative Ricci curvature are in fact geometrically 
formal. Indeed, their standard locally homogeneous metrics are formal.

In the case of positive first Betti number, the classifications in Theorems~\ref{t:main3} and~\ref{t:Ham}
coincide. This is no coincidence, since for non-negative Ricci curvature harmonic one-forms are parallel by Bochner's
argument, and dictate the classification without even using the Ricci flow; compare~\cite{B}. In the geometrically 
formal case harmonic one-forms may not be parallel, but they are of constant length. This, together with Gauss--Bonnet
applied to the fibers of fibrations over $S^1$, leads to the same conclusion as with parallel one-forms.

\section{Four-manifolds}\label{s:4}

In this section we prove the main results, Theorems~\ref{t:main4} and~\ref{c:Ric}. The first step is the following:
\begin{thm}\label{t:4pos}
Let $M$ be a geometrically formal closed oriented four-manifold admitting some Riemannian
metric of positive scalar curvature. Then one of the following holds:
\begin{enumerate}
\item $M$ is a rational homology sphere,
\item $M$ is diffeomorphic either to $\C P^2$, to $S^2\times S^2$, or to one of the two $S^2$-bundles over $T^2$, or
\item $M$ is a mapping torus $M(\varphi)$, where $\varphi\colon N\longrightarrow N$ is an orientation-preserving
self-diffeomorphism of a three-manifold admitting a metric of positive scalar curvature and $\varphi^*$ has no non-zero
fixed vector in $H^1(N;\R)$.
\end{enumerate}
\end{thm}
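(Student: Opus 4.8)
The plan is to split the argument according to the first Betti number $b_1(M)$, using the constraints on geometrically formal manifolds recalled in Section~\ref{s:prelim} together with the positivity of the scalar curvature.

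\medskip

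\emph{Case $b_1(M)=0$.} Here the Euler characteristic need not vanish. Using that on a geometrically formal four-manifold with $b_1=0$ one has $b_2^{\pm}(M)\in\{0,1\}$, and that every self-dual (resp.\ anti-self-dual) harmonic two-form for a formal metric is symplectic (for the given, resp.\ the reversed, orientation), I would argue as follows. If $b_2^+(M)=b_2^-(M)=0$, then $M$ is a rational homology sphere and we are in case (1). If, say, $b_2^+(M)=1$, then $M$ carries a symplectic form; but $M$ also admits a metric of positive scalar curvature, so by Seiberg--Witten theory (Taubes) it cannot be symplectic with $b_2^+>1$ — which is already guaranteed — and more to the point a symplectic four-manifold with a psc metric must be rational or ruled. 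Combined with $b_1=0$ and $b_2^{\pm}\le 1$, the only possibilities are $\C P^2$ and $S^2\times S^2$ (the latter arising when $b_2^+=b_2^-=1$), landing in case (2). The Ricci-flat/strongly scalar-flat alternative does not occur here because we assume \emph{positive} scalar curvature.

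\medskip

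\emph{Case $b_1(M)\geq 1$.} Then $\chi(M)=0$, and by the Betti number bounds $b_1(M)\in\{1,2,4\}$ (the value $3=n-1$ being excluded, and $b_1=4$ forcing $M\cong T^4$, which has no psc metric). So $b_1(M)\in\{1,2\}$. In both subcases I want to produce a fibration over a torus. For $b_1=1$: a harmonic one-form of constant length (formality) is nowhere zero, hence after rescaling integrates to a fibration $M\to S^1$ with fiber a closed oriented three-manifold $N$; the psc metric on $M$ and a standard argument (the fiber is incompressible only if aspherical, which is impossible by Theorem~\ref{t:ess} applied to $N$, since an aspherical summand would survive) show $N$ admits a psc metric. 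This realizes $M$ as a mapping torus $M(\varphi)$, and $b_1(M)=1$ forces $\varphi^*$ to have no nonzero fixed vector on $H^1(N;\R)$, giving case (3). For $b_1=2$: two harmonic one-forms of constant length with constant inner product span a parallel-looking rank-two system; taking a rational approximation gives a fibration $M\to T^2$ with fiber a surface $\Sigma$, and positivity of the scalar curvature forces $\Sigma=S^2$ (else $M$ would be aspherical, contradicting psc). An $S^2$-bundle over $T^2$ then has $b_1=2$ and there are exactly two such bundles, the trivial one and the twisted one, giving case (2).

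\medskip

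The main obstacle I expect is the $b_1=1$ subcase: one must pass from ``a nowhere-zero closed one-form'' to an honest fibration over $S^1$ and identify the monodromy, and then argue that the three-dimensional fiber $N$ inherits a positive-scalar-curvature metric rather than merely non-negative — this is where Theorem~\ref{t:ess}, i.e.\ the Schoen--Yau/Gromov--Lawson/Perelman package, is essential, via the observation that a psc four-manifold fibering over $S^1$ cannot have an aspherical piece in the fiber's prime decomposition. A secondary subtlety is ensuring that the constant-length one-forms can be simultaneously used to build the torus fibration when $b_1=2$, and that no extra diffeomorphism types of $S^2$-bundles over $T^2$ sneak in; this is handled by the classification of $S^2$-bundles over surfaces. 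Everything else is bookkeeping with the Betti-number bounds and the symplectic consequences of formality recalled above.
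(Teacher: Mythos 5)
Your overall architecture (splitting by $b_1$, invoking Liu and Ohta--Ono for symplectic psc four-manifolds, and the mapping-torus structure from formality when $b_1=1$) matches the paper's in its key inputs, but there are two genuine gaps. The more serious one is in your case $b_1(M)=0$, $b_2^+(M)=b_2^-(M)=1$: you assert the only possibility is $S^2\times S^2$, but $\C P^2\#\overline{\C P^2}$ (the non-trivial $S^2$-bundle over $S^2$) has exactly the same Betti numbers, is rational, and admits psc metrics, so nothing in your argument excludes it --- yet it is not on the theorem's list, which allows only the trivial bundle over $S^2$. The paper devotes Lemma~\ref{l:blowup} to this point: if $\C P^2\#\overline{\C P^2}$ were geometrically formal, the class $H$ of square $+1$ would be represented by a symplectic form, which would have to be symplectically minimal since the only class of square $-1$ is $E$ and $H\cdot E=0$; this contradicts the Lalonde--McDuff classification, by which every symplectic form on this manifold is standard and hence non-minimal. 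This step is not bookkeeping and cannot be omitted.

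The second gap, which you partly flag yourself, is the claim that the fiber $N$ admits a psc metric when $b_1(M)=1$. Your parenthetical justification (``the fiber is incompressible only if aspherical\dots'') is not an argument: Theorem~\ref{t:ess} is a statement about three-manifolds, and the issue is precisely how to transfer the psc hypothesis from the four-manifold $M$ down to the fiber $N$. The paper does this via the Schoen--Yau minimal hypersurface method: psc on $M$ yields an embedded psc hypersurface $S\hookrightarrow M$ in the homology class of the fiber; the composite $S\hookrightarrow M\to S^1$ is null-homotopic, so by the homotopy lifting property $S$ is homotopic into a single fiber, giving a degree-one map $S\to N$; rational inessentiality of $S$ (from psc and Theorem~\ref{t:ess}) then passes to $N$ under the degree-one map, and Theorem~\ref{t:ess} converts this back into a psc metric on $N$. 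Your $b_1=2$ analysis, by contrast, is a legitimate alternative to the paper's route (which treats all of $b_2>0$ at once via the rational/ruled classification), provided you justify that a surface bundle over $T^2$ with fiber of positive genus admits no psc metric, e.g.\ by asphericity or enlargeability.
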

\begin{proof}
Recall that geometric formality gives strong a priori bounds on the Betti numbers, and that it implies that $M$ is symplectic
(for a suitable orientation) as soon as $b_2(M)\neq 0$. So we will start with this case.

If $b_2(M)>0$, then $M$ is a closed symplectic four-manifold admitting a psc metric. Such manifolds were classified
by Liu~\cite{L} and Ohta--Ono~\cite{OO}, who showed that they are rational or ruled. Given the Betti number bounds from
geometric formality, this means that $M$ is diffeomorphic either to $\C P^2$ or to an $S^2$-bundle over $S^2$ or $T^2$. 
For both bases there are precisely two bundles up to diffeomorphism of the total space. One of these bundles can be ruled out:
\begin{lem}\label{l:blowup}
The non-trivial $S^2$-bundle over $S^2$ is not geometrically formal.
\end{lem}
\begin{proof}
To have a convenient basis for the cohomology, we identify the non-trivial $S^2$-bundle over $S^2$ with $M=\C P^2\#\overline{\C P^2}$,
and let $H$ and $E$ be the two generators coming from the connected summands. Then $H^2=1$, $E^2=-1$, and $H\cdot E=0$.
If $M$ were geometrically formal, then every cohomology class with positive square would be represented by a symplectic form,
namely the harmonic representative with respect to a formal Riemannian metric. This applies in particular to $H$. Now the only
class with square $-1$ is $E$, with which $H$ pairs trivially. Thus, for the symplectic form in the cohomology class $H$ there 
can be no symplectic $(-1)$-sphere, meaning that the symplectic structure is minimal. This contradicts the result of 
Lalonde--McDuff~\cite{LM} to the effect that any symplectic form on $M$ is symplectomorphic to a standard K\"ahler form,
all of which are symplectically non-minimal. Thus $M$ cannot be geometrically formal.
\end{proof}

If $b_2(M)=0$, then either $M$ is a rational homology sphere, or $b_1(M)\neq 0$. In the latter case the Euler characteristic
must vanish since harmonic one-forms with respect to formal metrics are of constant length. Thus, $b_1(M)=1$. In this 
case $M$ is a mapping torus $M(\varphi)$ for some orientation-preserving diffeomorphism $\varphi$ of a closed three-manifold
$N$, cf.~\cite[Theorem~7]{Formal}. Since $b_1(M)=1$, it follows that $\varphi^*$ has no non-zero invariant vector in $H^1(N;\R)$.
It remains to show that $N$ admits a metric of positive scalar curvature.

The total space $M$ of the mapping torus admits a psc metric by assumption. The Schoen--Yau argument~\cite{SY} therefore implies
that there is an embedded hypersurface $i\colon S\hookrightarrow M$ in the homology class of the fiber $N$ with $S$ admitting
a metric of positive scalar curvature. Let $f\colon M\longrightarrow S^1$ be the projection of the mapping torus. Then 
$(f\circ i)^*$ is trivial on $H^1(S^1;\Z)$, and so is null-homotopic. By the homotopy-lifting property of $f$, this means that $i$ 
is homotopic to a map into a single fiber $S\longrightarrow N$. This map has degree one. Since $S$ has psc, it is rationally inessential,
and so $N$ must also be rationally inessential. Thus $N$ has psc, compare Theorem~\ref{t:ess}.

This completes the proof of Theorem~\ref{t:4pos}.
\end{proof}

\begin{rem}
The attentive reader will have noticed the emphasis on {\it symplectic} minimality in the proof of Lemma~\ref{l:blowup}.
Of course the manifold $\C P^2\#\overline{\C P^2}$ carries the {\it holomorphically} minimal K\"ahler structures defined by
higher odd Hirzebruch surfaces. Although these are holomorphically minimal, they are not symplectically minimal.
The relation between the two notions of minimality was clarified in~\cite{HK}.
\end{rem}

With Theorem~\ref{t:4pos} in hand, we will complete the proof of Theorem~\ref{t:main4} by looking at strongly scalar-flat 
manifolds.
\begin{thm}\label{t:ssf}
Let $M$ be a strongly scalar-flat geometrically formal closed oriented four-manifold. Then one of the following holds:
\begin{enumerate}
\item $M$ is a rational homology sphere with finite fundamental group, or
\item $M$ is diffeomorphic to a flat manifold.
\end{enumerate}
\end{thm}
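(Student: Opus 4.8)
The plan is to separate the argument according to the value of the first Betti number $b_1(M)$, just as in the proof of Theorem~\ref{t:4pos}, and then to use the structure theory of strongly scalar-flat manifolds recalled in Subsection~\ref{ss:ssf}. Since $M$ is strongly scalar-flat, every scalar-flat metric on $M$ is Ricci-flat, and in dimension four a Ricci-flat geometrically formal manifold should be either flat or, if $b_2(M)\neq 0$, a finite isometric quotient of a $K3$ surface. The last possibility must be eliminated: those quotients have $b_1=0$, $b_2^-$ too large (the covering $K3$ has $b_2^-=19$, and even the $\Z_2$ or $\Z_2\times\Z_2$ quotients classified by Hitchin have $b_2^-$ exceeding the bound $b_2^-(T^4)=3$ coming from geometric formality), contradicting the Betti-number bounds recalled in Subsection~\ref{s:prelim}. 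Hence if $b_2(M)\neq 0$ we must be in the flat case.

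First I would dispose of the case $b_2(M)\neq 0$. Here geometric formality makes $(M,\omega)$ symplectic for a suitable orientation, and it carries a Ricci-flat metric; by Theorem~\ref{t:KW} this metric is a finite flat or Calabi--Yau quotient. The Calabi--Yau (i.e.\ $K3$-quotient) alternative is ruled out by the Betti-number bound $b_2^-(M)\leq 3$ as above, so $M$ is a flat manifold, which is conclusion~(2).

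Next, the case $b_2(M)=0$. Then either $b_1(M)=0$, in which case $M$ is a rational homology sphere, or $b_1(M)>0$. If $b_1(M)>0$ then, since $M$ is strongly scalar-flat, any scalar-flat — hence Ricci-flat, hence flat — metric forces $M$ to be a flat manifold, giving conclusion~(2) again. (Alternatively one can argue directly: a harmonic one-form of constant length with respect to a formal metric is nowhere zero, so $M$ fibers over $S^1$; combining this with the fact that in dimension three strongly scalar-flat implies flat, one recovers that $M$ is flat.) It remains to see that in the rational homology sphere case we additionally get $\pi_1(M)$ finite. For this I would again use that any scalar-flat metric on the strongly scalar-flat manifold $M$ is Ricci-flat; a closed Ricci-flat four-manifold with $b_2=0$ has vanishing Euler characteristic forcing — together with $b_1=0$ — a contradiction unless it is flat with finite holonomy, or one invokes the structure of Ricci-flat four-manifolds directly. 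In any case a rational homology sphere admitting a Ricci-flat metric must have finite fundamental group: an infinite $\pi_1$ would make the universal cover a non-compact Ricci-flat four-manifold, and by Bieberbach/Cheeger--Gromoll splitting a line would split off, forcing a positive first Betti number on a finite cover, contradicting $b_1(M)=0$ combined with the fact that finite covers of rational homology spheres have controlled first Betti number — more cleanly, a Ricci-flat closed manifold with $b_1=0$ has finite fundamental group by Cheeger--Gromoll. This yields conclusion~(1).

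The main obstacle is the case $b_2(M)=0$ with $b_1(M)=0$: concluding finiteness of $\pi_1$. The cleanest route is the Cheeger--Gromoll splitting theorem: the universal cover $\widetilde M$ is Ricci-flat and, were $\pi_1(M)$ infinite, $\widetilde M$ would contain a line, hence split as $\R\times X$; this makes a finite cover of $M$ have positive first Betti number, contradicting $b_1(M)=0$ (the first Betti number of a finite cover of a rational homology sphere is again zero, since any free abelian quotient would pull back). So $\pi_1(M)$ is finite, completing the proof.
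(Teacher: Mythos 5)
Your case division and your treatment of the case $b_2(M)>0$ match the paper: there geometric formality makes $M$ symplectic, Theorem~\ref{t:KW} applies, and the $K3$-quotients are excluded by the bounds $b_2\le b_2(T^4)$ and $b_2^-\le b_2^-(T^4)=3$ (the paper uses $b_2$ for $K3$ and its $\Z_2$-quotient and $b_2^-$ for the order-four quotient; your uniform use of $b_2^-$ also works). The two remaining steps, however, each contain a genuine gap.

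First, in the case $b_2(M)=0$, $b_1(M)>0$ you write ``scalar-flat --- hence Ricci-flat, hence flat.'' The second implication is false in dimension four: a $K3$ surface is Ricci-flat but not flat. The paper's argument is different and uses geometric formality in an essential way: since a harmonic one-form for a formal metric has constant length, $b_1(M)>0$ forces $\chi(M)=0$; then the Chern--Gauss--Bonnet formula for an Einstein four-manifold, $\chi(M)=\frac{1}{8\pi^2}\int_M\bigl(|W|^2+\tfrac{s^2}{24}\bigr)\,d\mathrm{vol}\ge 0$ with equality only for a flat metric, yields flatness. Your parenthetical alternative (fibering over $S^1$ and quoting the three-dimensional result) does not close the gap either: you would need to know the fiber is strongly scalar-flat, and even a flat fiber does not by itself make the mapping torus flat.

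Second, your finiteness argument for $\pi_1(M)$ in the rational homology sphere case rests on the claim that a finite cover of a rational homology sphere has $b_1=0$ ``since any free abelian quotient would pull back.'' This is not true: the transfer only gives $b_1(M)\le b_1(\widetilde M)$ for a finite cover $\widetilde M\to M$, and finite covers of rational homology spheres can very well have positive first Betti number. Likewise, ``Ricci-flat with $b_1=0$ implies $\pi_1$ finite'' is false as a general statement (the flat Hantzsche--Wendt three-manifold is a rational homology sphere with infinite $\pi_1$). The correct mechanism, which is the paper's Lemma~\ref{l:CG}, is the Euler characteristic: if $\pi_1(M)$ were infinite, the Cheeger--Gromoll splitting would produce a finite cover splitting off a circle diffeomorphically, hence with $\chi=0$, so $\chi(M)=0$ by multiplicativity --- contradicting $\chi(M)=2$ for a rational homology four-sphere. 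You invoke the right theorem, but the contradiction must be drawn from $\chi$, not from $b_1$ of the cover.
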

\begin{proof}
Since $M$ is strongly scalar-flat, it is Ricci-flat.

As before, if $b_2(M)>0$, then $M$ is symplectic. In this case Theorem~\ref{t:KW} tells us that $M$ is flat, or 
a quotient of a $K3$ surface by a group of order $1$, $2$ or $4$. Now a $K3$ surface and a $\Z_2$ quotient of 
it have second Betti numbers that violate the bound imposed by geometric formality. A quotient by a group of 
order $4$ has second Betti number $4<b_2(T^4)$, so this does not rule out such a quotient. However, the signature
of the quotient is $-4$, and so $b_2^-$ of this quotient is too large for it to be geometrically formal. Thus a
geometrically formal Ricci-flat manifold with positive second Betti number is flat.

If $b_2(M)=0$, then either $b_1(M)=1$, or $M$ is a rational homology sphere. If $b_1(M)=1$, then $M$ is 
an Einstein manifold with zero Euler characteristic, and so the Gauss--Bonnet formula shows that $M$ is flat. 
Finally, if $M$ is a rational homology sphere, then its Euler characteristic is positive, and so the fundamental
group is finite by the following well-known lemma, which completes the proof of the theorem.
\end{proof}


\begin{lem}\label{l:CG}
For a closed oriented four-manifold $M$ with $Ric\geq 0$ the following are equivalent:
\begin{enumerate}
\item the Euler characteristic $\chi (M)$ vanishes,
\item the first Betti number $b_1(M)$ does not vanish, and
\item the fundamental group $\pi_1 (M)$ is infinite.
\end{enumerate}
\end{lem}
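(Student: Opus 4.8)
The plan is to prove the cycle of implications $(1)\Rightarrow(2)\Rightarrow(3)\Rightarrow(1)$, invoking the Bochner technique and the Cheeger--Gromoll splitting theorem. The implication $(1)\Rightarrow(2)$ is the contrapositive of a standard fact: if $b_1(M)=0$, then since $M$ is a closed oriented four-manifold, Poincar\'e duality gives $b_3(M)=0$ as well, so $\chi(M)=2+b_2(M)\ge 2>0$. (Here one does not even need the curvature hypothesis.) For $(3)\Rightarrow(1)$, suppose $\pi_1(M)$ is infinite. Then the universal cover $\widetilde M$ is a non-compact complete manifold with $Ric\ge 0$, so by Cheeger--Gromoll it splits isometrically as $\R^k\times X$ with $X$ compact and $k\ge 1$. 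One then argues that $M$ is finitely covered by a product $T^k\times X'$ (passing to the subgroup of $\pi_1(M)$ that preserves the splitting factors and acts by translations on the Euclidean factor, which has finite index by Bieberbach-type arguments), and a finite cover of a closed manifold has the same sign of Euler characteristic up to the degree; since $\chi(T^k\times X')=0$ when $k\ge 1$, we get $\chi(M)=0$.

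The most economical route for $(2)\Rightarrow(1)$ — and the step I would treat as the heart of the argument — uses the Bochner formula directly rather than the splitting theorem. If $b_1(M)\ne0$, pick a nonzero harmonic one-form $\alpha$; since $Ric\ge0$, the Bochner--Weitzenb\"ock formula $0=\Delta\alpha=\nabla^*\nabla\alpha+Ric(\alpha)$ forces, after integrating against $\alpha$, both $\nabla\alpha=0$ and $Ric(\alpha^\sharp,\cdot)=0$. Thus $\alpha$ is parallel, hence nowhere zero, so the dual vector field $\alpha^\sharp$ is a nowhere-vanishing vector field on $M$, and therefore $\chi(M)=0$ by Poincar\'e--Hopf. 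This already gives $(2)\Rightarrow(1)$ cleanly, and combined with $(1)\Rightarrow(2)$ it shows that $(1)$ and $(2)$ are equivalent. It then remains only to fold in $(3)$: the implication $(2)\Rightarrow(3)$ is immediate since $b_1(M)\ne 0$ means $H_1(M;\Z)$ is infinite, so $\pi_1(M)$ is infinite; and $(3)\Rightarrow(1)$ can be obtained either by the Cheeger--Gromoll argument above or, alternatively, by noting that an infinite $\pi_1(M)$ together with $Ric\ge 0$ forces $b_1(M)>0$ after passing to a finite cover (again via Cheeger--Gromoll, since a finite $b_1$ on all finite covers would contradict the existence of a non-compact universal cover that splits). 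Either way one lands back at $\chi=0$.

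The main obstacle is the implication $(3)\Rightarrow(2)$ (equivalently $(3)\Rightarrow(1)$): passing from ``$\pi_1$ infinite'' to ``some finite cover has a parallel one-form'' genuinely requires the Cheeger--Gromoll splitting theorem, since $b_1$ of $M$ itself could vanish even though $\pi_1(M)$ is infinite only if the Euclidean de Rham factor of $\widetilde M$ is acted on with rotational parts — but Bieberbach's theorem bounds these rotational parts to a finite group, so a finite-index subgroup acts by pure translations, producing a finite cover with $b_1>0$; a short argument then transports $\chi=0$ back down. I would present the Bochner argument for $(1)\Leftrightarrow(2)$ in full and cite Cheeger--Gromoll together with the finite-cover observation for the remaining implication, remarking that this is where the ``well-known'' label is doing its work.
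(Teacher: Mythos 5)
Your proposal is correct and follows essentially the same route as the paper: the cycle $(1)\Rightarrow(2)\Rightarrow(3)$ via Poincar\'e duality and the finiteness of $H_1$, and $(3)\Rightarrow(1)$ via the Cheeger--Gromoll splitting theorem applied to the universal cover, descending to a finite cover that splits off a torus factor diffeomorphically. Your additional Bochner/Poincar\'e--Hopf proof of $(2)\Rightarrow(1)$ is a correct but redundant extra, since that implication already follows from $(1)\Leftrightarrow(2)$ established by the elementary Betti-number count.
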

\begin{proof}
This is due to Cheeger--Gromoll~\cite[Corollary~9.4]{CG1}\footnote{In~\cite{CG1} non-negative sectional curvature is 
assumed. By~\cite{CG2} the argument extends to $Ric\geq 0$.} 
and Yau~\cite[Corollary~2]{Yau}.

It is clear that each statement implies the one below it. However, the last one implies the first via the Cheeger--Gromoll
splitting theorem, cf.~\cite{CG2}, as follows. If the fundamental group is infinite, then the universal covering of 
$M$ is non-compact, and so contains a line that splits off isometrically. This implies that $M$ has a finite covering that
splits off a circle as a direct factor (diffeomorpically, not necessarily isometrically), and so has vanishing Euler characteristic.
Thus $\chi (M)=0$ by multiplicativity in coverings. 
\end{proof}

Theorems~\ref{t:4pos} and~\ref{t:ssf} together imply Theorem~\ref{t:main4} stated in the introduction.
We now show by example that not all of the manifolds listed in that theorem admit positive scalar curvature.

\begin{ex}\label{ex}
Examples of aspherical four-manifolds that are rational, respectively integral, homology spheres have been constructed 
by Luo~\cite{Luo}, respectively Ratcliffe--Tschantz~\cite{RT}. According to Schoen--Yau~\cite{SY2}, no aspherical four-manifold
allows a psc metric. The examples of~\cite{RT} are spin and non-positively curved. Therefore the existence of psc metrics on 
them is excluded by the Gromov--Lawson~\cite{GL,GL2} enlargeability obstruction. Theorem~\ref{t:ssf} implies that these 
manifolds are not strongly scalar-flat either, since they have infinite fundamental groups. Thus they do not admit any metrics 
of non-negative scalar curvature.
\end{ex}

We saw that in dimension three all manifolds with metrics of non-negative Ricci curvature are geometrically
formal. This is no longer true in dimension four. We have already seen the example of the $K3$ surface,
which is Ricci-flat but not geometrically formal. Among the del Pezzo surfaces there are also examples of 
manifolds with constant positive Ricci curvature which are not geometrically formal because their second 
Betti numbers are too large. If we consider four-manifolds with positive first Betti number, then 
there is an extension of Hamilton's Theorem~\ref{t:Ham}, inspired by the discussion of B\"ar~\cite{B}:
\begin{prop}\label{p:Ric}
Let $M$ be a closed oriented four-manifold with $b_1(M)>0$. If $M$ admits a metric of non-negative Ricci
curvature, then $M$ is diffeomorphic to:
\begin{enumerate}
\item an $S^2$-bundle over $T^2$,
\item a mapping torus $M(\varphi)$, where $\varphi$ is an orientation-preserving isometry of a spherical space form or of 
$\R P^3\#\R P^3$ with their standard metrics, or
\item a flat manifold.
\end{enumerate}
Conversely, all these manifolds admit metrics with non-negative sectional curvature.
\end{prop}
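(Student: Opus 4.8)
The plan is to use the Bochner formula to make harmonic one-forms parallel, split off the flat directions from the universal cover, and read off the answer from the remaining low-dimensional geometry. Since $Ric\ge 0$, every harmonic one-form on $M$ is parallel, so $b_1(M)>0$ produces a nonzero parallel one-form whose metric dual, after rescaling, is a unit parallel vector field $\xi$. Lifting $\xi$ and invoking the de~Rham decomposition together with the Cheeger--Gromoll structure theorem for compact manifolds of non-negative Ricci curvature~\cite{CG1,CG2}, the universal cover splits isometrically as $\widetilde M=\R^k\times\widehat N$, with $\widehat N$ compact and simply connected, $k\ge 1$ the dimension of the Euclidean de~Rham factor, and $b_1(M)\le k$. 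Since $\dim\widehat N=4-k$ and there is no compact simply connected one-manifold, $k\in\{1,2,4\}$, with $\widehat N$ a point, $S^3$ (by Perelman), or $S^2$ respectively. As $\xi$ descends to $M$, the class of its dual in $H^1(M;\R)$ is nonzero and $\pi_1(M)$ displaces the first $\R$-coordinate of $\widetilde M$ exactly according to this class; I will use this to identify the fibers of the fibrations over $S^1$ that arise below.

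The case $k=4$ gives $\widetilde M=\R^4$ and $M$ flat, which also disposes of $b_1(M)\ge 3$. In the case $k=1$ one has $b_1(M)=1$, and the closed nowhere-zero one-form dual to $\xi$ exhibits $M$, by Tischler's theorem, as a mapping torus $M(\varphi)$ of an orientation-preserving self-diffeomorphism $\varphi$ of a closed oriented three-manifold $N$, with fibration class parallel to that of $\xi$. Then $\pi_1(N)$ fixes the first $\R$-coordinate of $\widetilde M=\R\times S^3$, so acts freely on $\{0\}\times S^3$; hence the universal cover of $N$ is $S^3$ and $N$ is a spherical space form. Since the diffeomorphism type of $M(\varphi)$ depends only on the class of $\varphi$ in $\pi_0\operatorname{Diff}^+(N)$, and this group is realized by isometries of the round metric, I may take $\varphi$ to be such an isometry, landing in case~(2).

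In the case $k=2$ one has $\widetilde M=\R^2\times S^2$ and $b_1(M)\in\{1,2\}$. The deck group acts by isometries preserving the de~Rham splitting; the kernel of its projection to $\Isom(\R^2)$ acts freely on $S^2$, hence is trivial or $\Z_2$, and the image $\Gamma$ is a two-dimensional crystallographic group with $\R^2/\Gamma$ compact and $b_1(\Gamma)=b_1(M)$. If $b_1(M)=2$, then $\Gamma\cong\Z^2$ acts freely, so $M$ fibers over $T^2$ with fiber $S^2$ or $\RP^2$; orientability of $M$ excludes the $\RP^2$ fiber, so $M$ is one of the two $S^2$-bundles over $T^2$, which is case~(1). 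If $b_1(M)=1$, then $M=M(\varphi)$ by Tischler, and arguing with the first $\R$-coordinate of $\widetilde M=\R\times(\R\times S^2)$ shows the universal cover of the fiber $N$ to be $S^2\times\R$; hence $N$ is $S^1\times S^2$ or $\RP^3\#\RP^3$, and $\varphi^*$ has no nonzero fixed vector on $H^1(N;\R)$. For $N=\RP^3\#\RP^3$ I would replace $\varphi$ by an isometry of the standard $S^2\times\R$-quotient metric exactly as in the case $k=1$, landing in case~(2). For $N=S^1\times S^2$ the condition on $\varphi^*$ forces $\varphi$ to reverse the $S^1$-factor up to isotopy; running through the finite group $\pi_0\operatorname{Diff}^+(S^1\times S^2)$ then identifies $M$ as the orientable $S^2$-bundle over the Klein bottle, and this bundle is itself a mapping torus of an orientation-preserving isometry of $\RP^3\#\RP^3$ with its standard metric, again case~(2).

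For the converse, flat manifolds are clear; $T^2\times S^2$ is obviously of non-negative sectional curvature, and the non-trivial $S^2$-bundle over $T^2$ is flat, with holonomy $\Z_2\oplus\Z_2\subset\operatorname{SO}(3)$, hence a quotient of $\R^2\times S^2$ with the round metric; and a mapping torus $M(\varphi)$ with $\varphi$ an isometry of a round spherical space form or of the standard $\RP^3\#\RP^3$ inherits non-negative sectional curvature from the $\varphi$-invariant product metric $dt^2+g_N$ on $\R\times N$. I expect the main obstacle to be the last subcase above, $b_1(M)=1$ with $N=S^1\times S^2$: one must locate $M$ concretely among the $S^2$-bundles over the Klein bottle and then re-present that bundle as a mapping torus of an isometry of $\RP^3\#\RP^3$ using the explicit $S^2\times\R$-geometry. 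A subsidiary but necessary ingredient, invoked whenever a monodromy is promoted to a model isometry, is that the mapping class group of an elliptic three-manifold, and of $\RP^3\#\RP^3$, is realized by isometries of the corresponding homogeneous metric.
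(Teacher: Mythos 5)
Your overall strategy---Bochner to make harmonic one-forms parallel, then the Cheeger--Gromoll splitting $\widetilde M=\R^k\times\widehat N$ of the universal cover and a case division by $k$---is a legitimate variant of the paper's argument, which instead works downstairs with the Albanese fibration $M\to T^{b_1(M)}$ (the Calabi--Yau--Fischer--Wolf construction) and divides cases by $b_1\in\{1,2,4\}$; most of your analysis tracks the paper's. But the step you yourself flagged as the main obstacle genuinely fails. If the fiber is $N=S^1\times S^2$ with monodromy $\varphi$ acting by $-1$ on $H^1(N;\R)$ (for instance the simultaneous reflection $(t,x)\mapsto(\bar t,-x)$, an orientation-preserving isometry of the product metric), then $M(\varphi)$ is indeed an orientable $S^2$-bundle over the Klein bottle, but it is \emph{not} a mapping torus of any self-diffeomorphism of $\R P^3\#\R P^3$: its fundamental group is the Klein bottle group $\Z\rtimes\Z$, which is torsion-free, whereas every mapping torus of $\R P^3\#\R P^3$ contains $\Z_2*\Z_2$ and hence has $2$-torsion. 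So this case cannot be folded into case (2) as you propose. (The paper disposes of the same case by asserting that every orientation-preserving self-diffeomorphism of $S^1\times S^2$ is isotopic to the identity; since $\pi_0\operatorname{Diff}^+(S^1\times S^2)\cong\Z_2\times\Z_2$ contains the simultaneous reflection, which acts by $-1$ on $H_1$, your implicit disagreement with that assertion is well founded---but then the tension with the stated list of the proposition has to be confronted, not resolved by a false identification.)

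A second, repairable issue: whenever you promote the monodromy to an isometry of a standard metric you invoke surjectivity of $\Isom(N)\to\pi_0\operatorname{Diff}^+(N)$ for all elliptic three-manifolds and for $\R P^3\#\R P^3$, i.e.\ the $\pi_0$-level of the generalized Smale conjecture---a very heavy external input, and one you do not actually have from your decomposition, since the splitting only gives $\widehat N$ \emph{diffeomorphic} to $S^3$ or $S^2$, not isometric to the round sphere. The paper gets the conclusion essentially for free: the fiber carries the induced metric with $Ric\geq 0$ and the monodromy is literally an isometry of that metric; Hamilton's Ricci flow deforms the fiber metric to a standard one while preserving its isometries, so the same $\varphi$ (not merely its isotopy class) becomes an isometry of a standard metric. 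Your own setup, where $N=\widehat N/\pi_1(N)$ sits isometrically inside $\widetilde M$, is one sentence away from this cleaner argument. Finally, in the converse direction, calling the non-trivial $S^2$-bundle over $T^2$ ``flat'' is a slip---it is a flat \emph{bundle}, i.e.\ an isometric quotient of $S^2\times\R^2$ exactly as in the paper's citation of Ue, not a flat manifold.
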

By Lemma~\ref{l:CG}, the assumption $b_1(M)>0$ is equivalent to $\pi_1(M)$ being infinite, since we are assuming $Ric\geq 0$.
\begin{proof}
The proof proceeds by the so-called Calabi construction of flat manifolds with positive first Betti number. It was observed by 
Yau~\cite{Yau} and Fischer--Wolf~\cite{FW} that this works more generally, under the assumption of non-negative Ricci curvature.

Fix a metric $g$ on $M$ with $Ric\geq 0$.
By the Bochner argument, all harmonic one-forms with respect to $g$ are parallel.
Therefore the Albanese map $\alpha_M\colon M\longrightarrow  T^{b_1(M)}$ defined by integration of harmonic forms is a 
submersion defining a smooth fiber bundle. Moreover, the metric $g$ is a local product metric for the 
orthogonal direct sum decomposition ${\mathcal V}\oplus {\mathcal H}$, where ${\mathcal H}$ are the 
tangents to the orbits of the $\R^{b_1(M)}$-action generated by the vector fields dual to the harmonic one-forms,
and ${\mathcal V}$ are the tangents to the fibers of the Albanese map. In particular the metric $g$ is flat 
on ${\mathcal H}$, and all the fibers of the Albanese carry the same induced metric. Moreover, the
induced metric on the fibers again has $Ric\geq 0$.

As in the flat, or in the geometrically formal, cases, non-negative Ricci curvature implies that $b_1(M^n)\neq n-1$.
Our four-manifold $M$ therefore has $b_1(M)\in\{ 1, \ 2, \ 4\}$. If the first Betti number is maximal, then the Albanese is 
an isometry, and $(M,g)$ is a flat $T^4$; cf.~\cite[Theorem~7]{Formal}. If $b_1(M)=2$, then the fiber of the Albanese is 
$S^2$ or $T^2$. In the latter case the induced metric on $T^2$ must be flat by Gauss--Bonnet, and so $M$ itself is flat. 

Finally, if $b_1(M)=1$, then the fiber of the Albanese is one of the three-manifolds in Hamilton's Theorem~\ref{t:Ham}. 
If the fiber is flat, then $g$ is flat. Moreover, the fiber cannot be $S^2\times S^1$ since this has the property that every 
orientation-preserving self-diffeomorphism is isotopic to the identity, and so the mapping torus would be diffeomorphic to 
$S^2\times T^2$, contradicting the assumption that $b_1(M)=1$.

We have now proved the first part of the proposition, except for the restriction on the monodromy of the mapping tori in 
case (2). We know that the fibers of the mapping tori are as claimed, and that the monodromy preserves some metric
with non-negative Ricci curvature on the fiber. By Hamilton's proof~\cite{Ham} of Theorem~\ref{t:Ham}, the Ricci flow 
deforms any such metric on a fiber to a standard metric. As the Ricci flow preserves the isometries, the monodromy will 
also be an isometry for a standard metric on the fiber.

Conversely, these mapping tori have standard metrics of non-negative sectional curvature modelled on 
$S^3\times\R$ respectively $S^2\times\R^2$ according to whether the fiber is a space form with geometry $S^3$, or 
is $\R P^3\#\R P^3$ with geometry $S^2\times\R$. For the manifolds in (1) and (3) the converse direction is 
clear, with the exception of the non-trivial $S^2$-bundle over $T^2$.

The non-trivial $S^2$-bundle over $T^2$ is a global isometric quotient of $S^2\times\R^2$ as follows; cf.~Ue~\cite[p.~167]{Ue}.
Identify $S^2$ with $\C\cup\{\infty\}$, and let the two generators of $\Z^2$ act on $S^2$ by $z\mapsto -z$ and 
$z\mapsto 1/z$, and on $\R^2$ by linearly independent translations. The resulting $S^2$-bundle over $T^2$ has a 
section of odd selfintersection, and so is non-spin. This means that it is the non-trivial bundle.
\end{proof}
This proof gives the following characterization of non-negatively curved four-manifolds with positive first Betti number in terms 
of Thurston geometries:

\pagebreak

\begin{cor}\label{c:char}
For a closed oriented four-manifold $M$ with $b_1(M)>0$ the following are equivalent:
\begin{enumerate}
\item $M$ admits a metric of non-negative sectional curvature,
\item $M$ admits a metric of non-negative Ricci curvature,
\item $M$ admits one of the Thurston geometries $S^2\times\R^2$, $S^3\times\R$ or $\R^4$.
\end{enumerate}
\end{cor}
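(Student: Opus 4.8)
The plan is to deduce Corollary~\ref{c:char} directly from Proposition~\ref{p:Ric} together with the converse statements already assembled in its proof. The implication (1)$\Rightarrow$(2) is trivial, since non-negative sectional curvature implies non-negative Ricci curvature. For (2)$\Rightarrow$(3), I would run through the three cases of the conclusion of Proposition~\ref{p:Ric}: a flat manifold carries the geometry $\R^4$; a mapping torus of an isometry of a spherical space form carries $S^3\times\R$, while a mapping torus of an isometry of $\R P^3\#\R P^3$ with its standard metric carries $S^2\times\R$, hence (after taking the product with a flat factor in the base direction, as made explicit in the proof of Proposition~\ref{p:Ric}) the four-dimensional geometry $S^2\times\R^2$; and finally both $S^2$-bundles over $T^2$ carry $S^2\times\R^2$, the trivial one obviously and the non-trivial one by the explicit isometric quotient description of $S^2\times\R^2$ given at the end of the proof of Proposition~\ref{p:Ric} (following Ue). So every manifold occurring in Proposition~\ref{p:Ric} admits one of the three listed Thurston geometries.

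For the remaining implication (3)$\Rightarrow$(1), I would simply observe that each of the three model spaces $S^2\times\R^2$, $S^3\times\R$ and $\R^4$ carries a homogeneous metric of non-negative sectional curvature (a product of round spheres and Euclidean factors), and that this metric descends to any closed quotient because the quotient is by a discrete group acting by isometries. Thus any closed four-manifold modelled on one of these geometries inherits a metric of non-negative sectional curvature. Combined with the previous paragraph, this closes the cycle of implications and proves the corollary.

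The only point that requires a little care — and which I would flag as the main thing to get right rather than a genuine obstacle — is the passage from the three-dimensional geometries appearing in Hamilton's Theorem~\ref{t:Ham} for the fibers to the four-dimensional geometry of the total space of the mapping torus. Here one uses that a mapping torus of an isometry $\varphi$ of a fiber $(N,h)$ with geometry $\mathcal{G}\in\{S^3,S^2\times\R,\R^3\}$ is modelled on $\mathcal{G}\times\R$, because the suspension flow glues the universal cover $\widetilde N\times\R$ by $(\varphi,+1)$, which is an isometry of the product model. When $\mathcal{G}=S^2\times\R$ this product model $(S^2\times\R)\times\R$ is exactly $S^2\times\R^2$, and when $\mathcal{G}=S^3$ it is $S^3\times\R$; the flat case is absorbed into $\R^4$. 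All of this is implicit in the converse part of the proof of Proposition~\ref{p:Ric}, so for the corollary it suffices to cite it.

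In summary: (1)$\Rightarrow$(2) is immediate; (2)$\Rightarrow$(3) is a case-by-case reading of the classification in Proposition~\ref{p:Ric}, using the explicit geometric models recorded in its proof; and (3)$\Rightarrow$(1) follows because each model geometry $S^2\times\R^2$, $S^3\times\R$, $\R^4$ has an invariant metric of non-negative sectional curvature that descends to all closed quotients. This establishes the equivalence of the three conditions.
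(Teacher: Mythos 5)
Your proof is correct and follows essentially the same route as the paper, which derives the corollary directly from the proof of Proposition~\ref{p:Ric}: the classification there gives (2)$\Rightarrow$(3) via the explicit geometric models (flat, $S^3\times\R$ and $S^2\times\R^2$ for the mapping tori, and Ue's quotient description for the non-trivial $S^2$-bundle), while (3)$\Rightarrow$(1) and (1)$\Rightarrow$(2) are immediate. No gaps.
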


We now have all the ingredients to prove Theorem~\ref{c:Ric}.
\begin{proof}[Proof of Theorem~\ref{c:Ric}]
Let $M$ be a geometrically formal manifold admitting some, possibly non-formal, metric with $Ric\geq 0$. Such 
a metric of course has non-negative scalar curvature, so we are in the situation of Theorem~\ref{t:main4}.
However, the assumption $Ric\geq 0$ allows (1) and (3) to be improved. 

By Lemma~\ref{l:CG}, the fundamental group is finite as soon as the first Betti number vanishes,
equivalently as soon as the Euler characteristic does not vanish. This explains the finiteness of the fundamental group
in statement (1). The improvement in case (3) follows from Proposition~\ref{p:Ric}.

For the converse direction of the theorem, we need to prove that there are indeed formal metrics on all these manifolds,
and that they can be chosen to have non-negative sectional curvature unless the manifold is a rational homology sphere.
On a rational homology sphere all metrics are formal. In case (2), $\C P^2$, $S^2\times S^2$ and $S^2\times T^2$ are 
symmetric spaces that have standard metrics which are both formal and with non-negative sectional curvature. 
By the proof of Proposition~\ref{p:Ric}, the non-trivial $S^2$-bundle over $T^2$ has a standard metric modelled on 
$S^2\times \R^{2}$. All the harmonic forms for this metric are in fact parallel, showing that the metric is formal. The same 
argument applies to flat manifolds. Finally, the mapping tori in case (3) also have the required metrics by the proof of 
Proposition~\ref{p:Ric}.
\end{proof}

\section{Higher dimensions}

The classification results we obtained in dimensions three and four have extensions to higher dimensions if we assume 
that the first Betti number is large enough compared with the dimension. 
For geometrically formal manifolds, or manifolds of non-negative Ricci
curvature, in dimension $n$ we have $b_1(M)\leq n$ with equality only if $M$ is diffeomorphic to $T^n$. In this 
case any metric that is either formal or satisfies $Ric\geq 0$ must be flat. In all other cases $b_1(M)\leq n-2$.

\begin{prop}
Let $M$ be a closed oriented $n$-manifold with $b_1(M)=n-2$. If $M$ admits a metric of non-negative Ricci
curvature, then $M$ is geometrically formal. 

Moreover, $M$ is diffeomorphic to either an $S^2$-bundle over $T^{n-2}$, or to a flat manifold. 
In the first case $M$ admits a metric of positive scalar curvature, whereas in the second case it is strongly
scalar-flat.
\end{prop}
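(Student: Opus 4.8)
The plan is to run the Calabi-construction argument used in the proof of Proposition~\ref{p:Ric}, now with an $(n-2)$-dimensional Albanese torus. Fix a metric $g$ on $M$ with $Ric\geq 0$. By Bochner's argument all harmonic one-forms are parallel, so the Albanese map $\alpha_M\colon M\to T^{n-2}$, defined by integrating a basis of harmonic one-forms, is a submersion and hence a smooth fibre bundle, and $g$ is a local product metric for the orthogonal splitting $\mathcal V\oplus\mathcal H$ into the tangents to the fibres and to the leaves of the parallel, flat, totally geodesic distribution $\mathcal H$. Every fibre then carries the same induced metric $g_F$ on a closed oriented surface with $Ric\geq 0$, so by Gauss--Bonnet the fibre is $S^2$ or $T^2$. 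If it is $T^2$, then $g_F$ is flat, hence so is $g$, and $M$ is a flat manifold; flat manifolds are geometrically formal and, by the discussion in Subsection~\ref{ss:ssf}, strongly scalar-flat. This settles one of the two cases.

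Assume now that the fibre is $S^2$. Then $\pi_1(M)\cong\pi_1(T^{n-2})=\Z^{n-2}$, and the universal cover $\widetilde M$, being the pullback of $\R^{n-2}\to T^{n-2}$ under $\alpha_M$, is an $S^2$-bundle over the contractible space $\R^{n-2}$, hence diffeomorphic to $S^2\times\R^{n-2}$. The pullbacks of $\mathcal H$ and $\mathcal V$ are complementary parallel distributions on the simply connected complete manifold $\widetilde M$, so the de Rham decomposition theorem upgrades this to an isometric splitting $\widetilde M=S^2\times\R^{n-2}$, with $\R^{n-2}$ flat, the $S^2$-factor carrying $g_F$, and $\pi_1(M)$ acting by isometries preserving both factors (and by translations on $\R^{n-2}$); in particular $M$ is a flat $S^2$-bundle over $T^{n-2}$. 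The scalar curvature of this product metric equals that of $g_F$, that is, twice the Gauss curvature of $g_F$, which is non-negative and, again by Gauss--Bonnet, not identically zero; the same then holds for the scalar curvature of $g$ on $M$, so $M$ admits a metric of positive scalar curvature by the result recalled at the start of Subsection~\ref{ss:ssf}.

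It remains to prove geometric formality, and in fact $g$ itself is a formal metric, because every harmonic form on $M$ is parallel. A harmonic form on the closed manifold $M$ pulls back to a bounded harmonic form on $\widetilde M=S^2\times\R^{n-2}$, and since parallelism is a local condition it is enough to show that every bounded harmonic form on this Riemannian product is parallel. Decompose such a form according to the number of $\R^{n-2}$-legs; on each summand the Hodge Laplacian of the product acts as $\Delta_{S^2}+\Delta_{\R^{n-2}}$, and expanding the $S^2$-form part in eigenforms of $\Delta_{S^2}$ one finds that the components lying in positive eigenspaces have $\R^{n-2}$-coefficients solving $\Delta_{\R^{n-2}}a+\mu a=0$ with $\mu>0$, which has no nonzero bounded solution, whereas the component in the kernel of $\Delta_{S^2}$ has bounded harmonic, hence constant, coefficient. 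Thus the form lies in $\mathcal H^*(S^2)\otimes\Lambda^*(\R^{n-2})^*$; as $\mathcal H^*(S^2)$ is spanned by the constants and the volume form of $g_F$, both parallel, and the constant forms on $\R^{n-2}$ are parallel, the form is parallel. Wedge products of parallel forms are parallel and hence harmonic, so $g$ is formal and $M$ is geometrically formal.

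Most of this is a transfer of the four-dimensional argument: the submersion property of the Albanese, the local product structure, Gauss--Bonnet on the fibre, and the global de Rham splitting of $\widetilde M$. The one genuinely new ingredient, and the step I expect to be the main obstacle, is the vanishing statement for bounded harmonic forms on $S^2\times\R^{n-2}$: in contrast to the case of one-forms, $Ric\geq 0$ does not force the Weitzenb\"ock curvature operator on higher-degree forms to be non-negative, so Bochner's argument does not apply directly on $M$, and one must instead exploit the product structure of the universal cover together with the Liouville property of $\R^{n-2}$. (Alternatively, one can first replace $g_F$ by a round metric -- legitimate because the monodromy is a finitely generated abelian group of diffeomorphisms of $S^2$, hence conjugate into $SO(3)$ -- and then quote the corresponding computation for the locally homogeneous metric modelled on $S^2\times\R^{n-2}$, exactly as in the proof of Theorem~\ref{c:Ric}.)
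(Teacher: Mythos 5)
Your argument is correct and follows essentially the same route as the paper, whose proof simply reruns the Calabi construction of Proposition~\ref{p:Ric} with an $(n-2)$-dimensional Albanese torus and applies Gauss--Bonnet to the surface fibre; your detailed verification of geometric formality (parallelism of all harmonic forms via the de~Rham splitting of the universal cover and the Liouville property of $\R^{n-2}$) is exactly the point the paper leaves implicit, handled for $n=4$ in the proof of Theorem~\ref{c:Ric} by the remark that all harmonic forms of the model metric are parallel. One small caveat on your parenthetical alternative: a finitely generated abelian group of diffeomorphisms of $S^2$ need not be conjugate into $SO(3)$ (think of a North--South dynamics generator), so you should instead use that the monodromy preserves $g_F$ and hence lies in the compact group $\Isom(S^2,g_F)$, after which the Ricci flow deforms $g_F$ to the round metric while preserving isometries, as in the paper's proof of Proposition~\ref{p:Ric}.
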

\begin{proof}
The proof is the same as that of Proposition~\ref{p:Ric}. The fiber of the Albanese map with respect to a metric $g$
satisfying $Ric\geq 0$ is either $S^2$ or $T^2$. In the second case $g$ must be flat because the induced metric
on $T^2$ is flat by Gauss--Bonnet. In the first case $M$ admits a psc metric.
\end{proof}

By the results of~\cite{Formal}, a geometrically formal $n$-manifold with $b_1(M)=n-2$ is a surface bundle 
\begin{equation*}\label{bun}
\Sigma^2\longrightarrow M^n\longrightarrow T^{n-2}
\end{equation*}
with the property that the fundamental group of the base acts without non-zero invariant vectors on $H^1(\Sigma;\R)$.
(The second map is the Albanese of any formal metric.)
Already for $n=3$, the genus of $\Sigma$ can be any natural number. However, if we assume that $M$ admits
some, possibly non-formal, Riemannian metric of non-negative scalar curvature, then $\Sigma$ has to be a sphere 
or torus.
\begin{thm}
Let $M$ be a geometrically formal closed oriented $n$-manifold with $b_1(M)=n-2$. If $M$ admits a metric of non-negative 
scalar curvature, then:
\begin{enumerate}
\item either $M$ is flat and $\Sigma$ is a torus, or
\item $M$ is an $S^2$-bundle over $T^{n-2}$. 
\end{enumerate}
\end{thm}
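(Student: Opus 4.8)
The strategy is to combine the structural results already assembled in the paper. By the remarks preceding the statement, a geometrically formal closed oriented $n$-manifold with $b_1(M)=n-2$ is a surface bundle $\Sigma^2\longrightarrow M^n\longrightarrow T^{n-2}$, where the Albanese map of a formal metric is the bundle projection and $\pi_1(T^{n-2})$ acts on $H^1(\Sigma;\R)$ without non-zero invariant vectors. The only extra hypothesis here is that $M$ carries \emph{some} (a priori unrelated) metric of non-negative scalar curvature. The plan is to use this to force $\Sigma\in\{S^2,T^2\}$, and then to analyze the two cases separately, mirroring the three- and four-dimensional arguments.

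First I would handle the fiber genus. If the scalar curvature is not identically zero, then $M$ admits a metric of positive scalar curvature (Subsection~\ref{ss:ssf}); if it is identically zero, then by the same subsection $M$ is strongly scalar-flat, hence Ricci-flat. In the first case I would argue that $\Sigma$ cannot have genus $\geq 2$: a genus-$\geq 2$ surface bundle over $T^{n-2}$ is aspherical (the fiber and base are aspherical, and the bundle gives a short exact sequence of groups with aspherical pieces), and aspherical manifolds of dimension $\geq 5$ admit no psc metric by the enlargeability / minimal-hypersurface results already invoked for Example~\ref{ex}; for $n=3,4$ this was dispatched in Theorems~\ref{t:main3} and~\ref{t:4pos}. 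Alternatively, and perhaps more cleanly, one can iterate the Schoen--Yau descent: a psc metric on $M$ yields, in the homology class Poincar\'e-dual to the pullback of a generator of $H^1(T^{n-2})$, a psc hypersurface which (by the homotopy-lifting argument as in the proof of Theorem~\ref{t:4pos}) maps with degree one onto a codimension-one sub-bundle $\Sigma^2\to M'\to T^{n-3}$; repeating $n-2$ times produces a degree-one map from a psc surface onto $\Sigma$, forcing $\Sigma=S^2$. So in the positive-scalar-curvature case we land in conclusion~(2) with the additional information that $M$ admits a psc metric, which is consistent with (2).

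In the Ricci-flat case I would use the Bochner argument: on a Ricci-flat manifold all harmonic one-forms are parallel, so the Ricci-flat metric is itself formal and its Albanese $M\to T^{b_1(M)}=T^{n-2}$ is a Riemannian submersion with parallel splitting $\mathcal V\oplus\mathcal H$, exactly as in the proof of Proposition~\ref{p:Ric}. The fiber $\Sigma$ inherits a Ricci-flat (hence, in dimension two, flat) metric, so $\Sigma=T^2$; and since the base is flat and the bundle is a local isometric product, $M$ itself is flat. This gives conclusion~(1). The one subtlety is that the two Albanese maps --- one for the given formal metric, one for the Ricci-flat metric --- need not coincide a priori, but both are fiber bundles classified by the same subgroup of $\mathrm{GL}$-data on $H^1$, and in the flat case \cite[Theorem~7]{Formal} pins everything down; alternatively one simply runs the argument with the Ricci-flat metric from the start, never using the original formal structure except to know $b_1(M)=n-2$ puts us outside the torus case.

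The main obstacle is the genus-reduction step in the psc case: one must be sure that the Schoen--Yau / Gromov--Lawson technology applies in all dimensions $n\geq 5$ to push a psc metric down along a codimension-one homology class, and that the resulting hypersurface can be taken connected and mapping with degree one to the fiber sub-bundle. The homotopy-lifting-property trick from the proof of Theorem~\ref{t:4pos} handles the degree-one claim once the hypersurface is in the right homology class; the existence and regularity of the minimal hypersurface in dimensions where the Schoen--Yau method is classically unobstructed (or, in higher dimensions, via the later refinements) is what must be cited carefully. Given that, the rest is bookkeeping: assemble the two cases and observe that they match the two alternatives in the statement, with $\Sigma=T^2$ exactly when $M$ is flat.
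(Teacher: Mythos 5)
Your treatment of the scalar-flat half is essentially the paper's: strongly scalar-flat implies Ricci-flat, and the preceding proposition (the Albanese/Bochner argument) forces the metric to be flat and $\Sigma=T^2$; your worry about the two Albanese maps is legitimate but, as you note, one can simply run the argument with the Ricci-flat metric from the start. The positive-scalar-curvature half is where you diverge from the paper and where there is a genuine gap. The paper uses neither asphericality alone nor minimal hypersurfaces: it observes that for $g(\Sigma)\geq 1$ the homotopy exact sequence of the Albanese fibration gives an extension $1\to\pi_1(\Sigma)\to\pi_1(M)\to\Z^{n-2}\to 1$, invokes the closure of the strong Novikov conjecture under such extensions, see \cite[Proposition~2.5]{Ros}, and then applies the Gromov--Lawson/Rosenberg index obstruction to the aspherical manifold $M$. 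Your first alternative rests on the assertion that aspherical manifolds of dimension $\geq 5$ admit no psc metric ``by enlargeability / minimal-hypersurface results''; that is not a theorem but an open conjecture (it is exactly what Novikov-type hypotheses on the fundamental group are needed for), and Example~\ref{ex} only covers dimension four. Moreover you only exclude genus $\geq 2$ there, whereas the theorem requires excluding every positive genus in the psc case: a $T^2$-bundle over $T^{n-2}$ is equally aspherical, and conclusion (1) demands that $M$ be flat, which a psc manifold is not.

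Your second alternative, the iterated Schoen--Yau descent, is a legitimate strategy in spirit but founders on the regularity of area-minimizing hypersurfaces once the ambient dimension exceeds $7$; since the theorem is claimed for all $n$, and the ``later refinements'' you allude to postdate the paper and are a far heavier input than what it actually uses, this route does not close the argument either. You correctly identify this as the main obstacle, but flagging it is not the same as resolving it. The missing idea is precisely the passage through the strong Novikov conjecture for the extension group $\pi_1(M)$, which sidesteps minimal hypersurfaces entirely and works uniformly in all dimensions.
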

\begin{proof}
If $M$ is strongly scalar-flat, then it admits a Ricci-flat metric. By the previous proposition, this metric must be flat.
It follows that $\Sigma$ is $T^2$.

It remains to show that $M$ cannot admit a metric of positive scalar curvature if the fiber $\Sigma$ of the Albanese fibration
is of positive genus. So assume $g(\Sigma)\geq 1$. Then $M$ is aspherical, and by the homotopy exact sequence 
of the Albanese fibration its fundamental group fits into an extension of the form
$$
1\longrightarrow\pi_1(\Sigma)\longrightarrow \pi_1(M)\longrightarrow \Z^{n-2}\longrightarrow 1 \ .
$$
Since the strong Novikov conjecture holds for $\pi_1(\Sigma)$, it also holds for $\pi_1(M)$, see Rosenberg~\cite[Proposition~2.5]{Ros}.
It follows that the aspherical manifold $M$ cannot have positive scalar curvature, essentially by the results of 
Gromov--Lawson~\cite{GL2}, compare also Rosenberg~\cite[Theorem~3.5]{Ros}.
\end{proof}

The results of this section should have extensions to $n$-manifolds with $b_1(M)=n-3$ since in dimension three we have a complete
understanding of manifolds with $Ric\geq 0$, and even with non-negative scalar curvature, compare Sections~\ref{s:prelim}
and~\ref{s:3} above. No such extension is possible for $b_1(M)=n-4$, because there are just too many four-manifolds with positive 
scalar curvature, and maybe even with positive Ricci curvature.

\bigskip

\bibliographystyle{amsplain}

\end{document}